\newtheorem{theorem}{Theorem}[section]
\newtheorem{proposition}[theorem]{Proposition}
\newtheorem{lemma}[theorem]{Lemma}
\newtheorem{corollary}[theorem]{Corollary}
\theoremstyle{definition}
\newtheorem{example}[theorem]{Example}
\newtheorem{remark}[theorem]{Remark}
\numberwithin{equation}{section}
\numberwithin{table}{section}
\newcommand{\Sym}{\textit{Sym}}     %% Algebra of syymmetric functions
\newcommand{\U}[1]{U_{#1}}          %% Operator U   
\newcommand{\UU}[1]{\U{\ss[#1 X]}}  %% Generating series for U 
\newcommand{\D}[1]{D_{#1}}          %% Operator D
\newcommand{\DD}[1]{\D{\ss[#1 X]}}  %% Generating series for D 
\newcommand{\K}[1]{K_{#1}}          %% Operator K
\newcommand{\KK}[1]{\K{\ss[#1 X]}}  %% Generating series for K
\newcommand{\KB}[1]{\overline{K}_{#1}} %% Operator K bar
\newcommand{\QQ}{\mathbb{Q}}             %% rational numbers
\renewcommand{\ss}{\sigma}              %% Generatingseries for the h functions
\newcommand{\scalar}[2]{\left\langle\, #1 \, \middle| \, #2 \, \right\rangle} %% scalar product
\newcommand{\noc}[1]{\#\mathrm{corners}(#1)}    
\newcommand{\plus}[1]{\widehat{#1}} %% shorthand for a partition that's bigger than #1
\newcommand{\minus}[1]{\widecheck{#1}}
\newcommand{\add}[1]{#1^+} %% defines a set
\newcommand{\remove}[1]{#1^-}
\newcommand{\addremove}[1]{{#1^{\mp}}}
\newcommand{\addrestrict}[2]{#1^{+#2}}
\newcommand{\addcomplement}[2]{#1^{+#2^c}}
\author[Briand]{Emmanuel Briand}
\address{Departamento de Matem\'atica Aplicada I, Escuela T\'ecnica Superior de Ingenier\'ia Inform\'atica,
Avda.\ Reina Mercedes, S/N,
41012 Sevilla, Espa\~na}
\email{ebriand@us.es}
\author[McNamara]{Peter R. W. McNamara}
\address{Department of Mathematics, Bucknell University, Lewisburg, PA 17837, USA}
\email{peter.mcnamara@bucknell.edu}
\author[Orellana]{Rosa Orellana}%
\address{Dartmouth College, Mathematics Department, 6188 Kemeny Hall, Hanover, NH 03755, USA} \email{rosa.c.orellana@dartmouth.edu}%
\author[Rosas]{Mercedes Rosas}
\address{Departamento de \'Algebra, Facultad de Matem\'aticas, Universidad de Sevilla, Avda.\ Reina Mercedes, Sevilla, Espa\~na}
\email{mrosas@us.es}
\begin{document}
\subjclass{Primary 05E10, Secondary 05E05}

\dedicatory{\centerline{Dedicated to Ira Gessel on the occasion of his retirement.}}

\keywords{Symmetric functions, Schur functions, normal ordering relations.}

\title[operators on symmetric functions]{Commutation and normal ordering for operators on symmetric functions}

%Abstract should precede \maketitle in AMS document classes;
\begin{abstract}
We study the commutation relations and normal ordering between families of operators on symmetric functions.  These operators can be naturally defined by the operations of multiplication, Kronecker product, and their adjoints.   As applications we give a new proof of the skew Littlewood--Richardson rule and  prove an identity about the Kronecker product with a skew Schur function.   
\end{abstract}

\maketitle

\section{Introduction}

Due to the their connection to representation theory, Schubert calculus, and  their beautiful combinatorial description, Schur functions are ubiquitous in algebraic combinatorics.  For this reason, it not surprising that identities  involving Schur functions greatly improve the understanding of these subjects

There are two important products naturally defined on symmetric functions:  the ordinary  and the   Kronecker product.  The well-known Littlewood-Richardson coefficients are the structure coefficients for the ordinary product, while the  elusive Kronecker coefficients are the structure coefficients for the  the Kronecker product.
They naturally define  linear operators on symmetric functions.

Let $f$ be a symmetric function. Then,  define  $U_f$ to be  the operator  ``multiplication by $f$'', and $K_f$ be  ``Kronecker multiplication by $f$.''  Explicitly, the operators act on a symmetric function $g$ by
\[
\U{f}(g)=f g, \qquad \K{f}(g)=f \ast g.
\]
The Hall inner product allow us to define the adjoint of $U_f$, which is denoted by $D_f$, and sometimes called the skewing operator.  With respect to this inner product $K_f$ is self-adjoint.  We will also consider another intriguing  operator related to the Kronecker product,  $\KB{\lambda}$,  defined on the Schur basis as follows: Let $\lambda$ be a partition and $g$ be any homogeneous symmetric function of degree $n$, then
\[
\KB{\lambda}(g)=s_{(n-|\lambda|,\lambda)}\ast g\,,
\] 
where $(n-|\lambda|,\lambda)=(n-|\lambda|,\lambda_1,\lambda_2,\ldots)$ is a sequence of integers. This sequence  is not always decreasing. To deal with this issue, we define the Schur function   $s_{(n-|\lambda|,\lambda)}$ by means of the Jacobi--Trudi formula:
$
s_{(\alpha_1,\alpha_2,\ldots,\alpha_N)}=\det(h_{\alpha_{i}+j-i})_{i,j=1\ldots N}.$
This determinant coincides with the Schur function $s_{\alpha}$ when $\alpha$ is weakly decreasing (\emph{i.e.,} is a partition) but makes sense even when $\alpha$ is not. 
Once the operators $\KB{\lambda}$ are defined, the definition is extended by linearity to $\KB{f}$ for any symmetric function $f$. 
%The existence of a scalar product $\scalar{}{}$ on $\Sym$ allows us to define another operator:  the adjoint of multiplication by $f$,  $D_f$. (This operator is denoted by $f^{\perp}$ in Macdonald's book.) On the Schur basis, it is the
%well-known ``skewing operator'', $D_{\lambda}(s_\mu)=s_{\mu / \lambda}$.
%
%The Kronecker product is self-adjoint, we get no new operator this way. 

%However, the operators $\KB{f}$ have  a basis-free definition 
%$
%\KB{f}=\K{\Gamma_1 f}
%$
%where $\Gamma_1$ is the \emph{vertex operator} \begin{equation}\label{Gamma-VO}
%\Gamma_1=\left( \sum_{i=0}^{\infty} U_{(i)}\right) \left( \sum_{j=0}^{\infty} (-1)^j \D{(1^j)}\right)
%\end{equation}
% considered in \cite[p.\ 211]{ThibonHopf} and in \cite[\S3]{ScharfThibonWybourne} to study properties of stability of Kronecker and plethysm coefficients.
% 
In this paper we study identities involving four families of operators on the ring of symmetric functions,  $\Sym$: $\U{\lambda}$, $\D{\lambda}$, $\K{\lambda}$, and $\KB{\lambda}$.  Here the index $\lambda$ is a partition and indicates that the operator is product by the Schur function $s_\lambda$.  We consider the following questions:
Given any pair of them.  Can we establish  the commutator relations between them? Given a word involving them, how can we put it in a normal form? Is this expression unique?
Is it possible to express some of them in terms of the other ones?

For a motivating example, let us look at the operators $\U{(1)}$ and $\D{(1)}$. They are well-known to satisfy the commutator relation
$
\D{(1)}\U{(1)} = \U{(1)}\D{(1)} +1.$ That is  Leibniz's rule for multiplication, when  $\U{(1)}$ is multiplication by $x$, and $\D{(1)}=\frac{\partial}{\partial x}$. This identity is the defining relation for the algebra of Weyl, and the building identity for Stanley's theory of   differential posets,
  \cite{Stanley-diffposet}.  

Our main result is the following theorem that gives beautiful commutation relations four each pair of operators. 
\begin{theorem}\label{thm:main}
For any partitions $\alpha$ and $\beta$ we have the following identities
(where $\lambda$, $\tau$ and $\nu$ each run over the set of all partitions).
\begin{align}
\D{\beta}\U{\alpha}&=
\sum_{\lambda} \U{{\alpha/\lambda}} \D{{\beta/\lambda}} \label{eqDbetaUalpha}\\
%%%%%%%%%%%
\U{\alpha}\D{\beta}&=\sum_{\lambda} (-1)^{|\lambda|} \D{{\beta/\lambda'}} \U{{\alpha/\lambda}}\label{eqUalphaDbeta}\\
%%
%%%%%%%%%%%%%
\K{\beta} \U{\alpha} &=\sum_{\lambda} \U{s_{\beta/\lambda} \ast s_{\alpha}} \K{\lambda} \label{KU2}\\
%%
%%%%
\D{\alpha}\K{\beta} &=\sum_{\lambda} \K{\lambda} \D{s_{\beta/\lambda} \ast s_{\alpha}}   \label{DK2}\\
%%
%%%%%%%%%%%%%%%%
\KB{\beta} \U{\alpha}&=
\sum_{\tau,\nu}   \U{(s_{\beta/\nu} \ast s_{\tau}) s_{\alpha/\tau}} \KB{\nu} \label{KKKU2}\\
%%
%%%%%%%%%%%
\D{\alpha} \KB{\beta} &=
\sum_{\tau,\nu} \KB{\nu}  \D{(s_{\beta/\nu} \ast s_{\tau}) s_{\alpha/\tau}}\label{DKKK2}
\end{align}
\end{theorem}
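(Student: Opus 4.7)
The plan is to exploit the adjoint structure of these four families of operators. Since $\D{\gamma}=\U{\gamma}^{*}$ with respect to the Hall inner product, and both $\K{\gamma}$ and $\KB{\gamma}$ are self-adjoint, taking the Hall adjoint of (\ref{KU2}) yields (\ref{DK2}) at once, and taking the adjoint of (\ref{KKKU2}) yields (\ref{DKKK2}). This reduces the theorem to the four identities (\ref{eqDbetaUalpha}), (\ref{eqUalphaDbeta}), (\ref{KU2}), and (\ref{KKKU2}).

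For (\ref{eqDbetaUalpha}), I would evaluate both sides on an arbitrary $g\in\Sym$ and invoke the Hopf-algebraic identity $(fg)/h = \sum_{i}(f/h_{(1),i})(g/h_{(2),i})$, where $\Delta(h)=\sum_{i}h_{(1),i}\otimes h_{(2),i}$; this expresses the compatibility of skewing with multiplication, dual to the associativity of the product. Substituting $f=s_{\alpha}$, $h=s_{\beta}$ and $\Delta(s_{\beta})=\sum_{\lambda}s_{\lambda}\otimes s_{\beta/\lambda}$ gives (\ref{eqDbetaUalpha}) immediately. For (\ref{KU2}), the analogous ingredient is the classical branching identity $s_{\beta}\ast(s_{\alpha}g)=\sum_{\lambda}(s_{\beta/\lambda}\ast s_{\alpha})(s_{\lambda}\ast g)$, which reflects how an irreducible $S_{n}$-character restricts along a Young subgroup, combined with the fact that the Kronecker product corresponds to internal tensor product; matching both sides after reindexing the coproduct sum yields (\ref{KU2}).

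Identity (\ref{eqUalphaDbeta}) is the most subtle: the factor $(-1)^{|\lambda|}$ together with the conjugate $\lambda'$ strongly suggests the antipode $S(s_{\lambda})=(-1)^{|\lambda|}s_{\lambda'}$ of $\Sym$. My plan is to prove it either (a) by computing the matrix entries $\scalar{\U{\alpha}\D{\beta}\,s_{\mu}}{s_{\nu}}$ on both sides in the Schur basis and reducing to the antipode axiom $\sum_{\lambda}(-1)^{|\lambda|}s_{\lambda'}\,s_{\mu/\lambda}=[\mu=\varnothing]$, or (b) by algebraic manipulation starting from (\ref{eqDbetaUalpha}) and viewing (\ref{eqUalphaDbeta}) as its antipode-twisted reversal, which commutes the operators past each other in the opposite direction.

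For (\ref{KKKU2}) I would adapt the proof of (\ref{KU2}) to the operator $\KB{\beta}$, whose defining Schur function $s_{(n-|\beta|,\beta)}$ is a Jacobi--Trudi determinant at a generally non-partition shape. The key new ingredient is a coproduct-type expansion for this determinant; I expect the two indices $(\tau,\nu)$ to record a splitting of the ``extended first row'' (via $\tau$) and a splitting of the remaining rows (via $\nu$), with the branching identity applied row-by-row. The main obstacle I anticipate lies here: the non-partition Jacobi--Trudi shape introduces degree-dependent cancellations, and organising them into the clean double sum of (\ref{KKKU2}) is likely to be the crux of the whole proof.
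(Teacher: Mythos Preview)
Your reduction of (\ref{DK2}) and (\ref{DKKK2}) to (\ref{KU2}) and (\ref{KKKU2}) by adjoints is exactly what the paper does. For (\ref{eqDbetaUalpha}) and (\ref{KU2}) your plan is correct but inverts the paper's logic: you invoke Foulkes's identity and Littlewood's identity as inputs, whereas the paper derives them as corollaries. The paper's uniform method is to package each operator family $P$ into a generating series $P_{\sigma[AX]}=\sum_\lambda s_\lambda[A]P_\lambda$, compute its action plethystically (e.g.\ $\DD{B}(f[X])=f[X+B]$, $\KK{B}(f[X])=f[BX]$), establish a single commutation relation such as $\DD{B}\UU{A}=\sigma[AB]\,\UU{A}\DD{B}$, and then extract Schur coefficients. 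Your route is legitimate since Foulkes and Littlewood are classical, but it forfeits the uniformity. For (\ref{eqUalphaDbeta}), your plan~(b) is essentially the paper's: once one has $\DD{B}\UU{A}=\sigma[AB]\,\UU{A}\DD{B}$, multiplying by $\sigma[-AB]$ gives $\UU{A}\DD{B}=\sigma[-AB]\,\DD{B}\UU{A}$, and the dual Cauchy expansion of $\sigma[-AB]$ produces the $(-1)^{|\lambda|}$ and the conjugate $\lambda'$.

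The genuine gap is (\ref{KKKU2}). Your proposed mechanism---a row-by-row Jacobi--Trudi expansion with $\tau$ splitting the ``extended first row'' and $\nu$ the remaining rows---does not match the structure of the identity and is unlikely to close. In (\ref{KKKU2}) the index $\tau$ is a subshape of $\alpha$, not of $\beta$'s augmented first row: it arises from expanding $s_\alpha$ at a sum of alphabets, not from dissecting the determinant defining $s_{(n-|\beta|,\beta)}$. The paper's key step, which you are missing, is a closed plethystic formula for the $\KB{}$ generating series. Writing $\KB{f}(g)=(\Gamma_1 f)\ast g$ with the vertex operator $\Gamma_1$ and using $\Gamma_1 f=\sigma[X]\,f[X-1]$, one obtains
\[
\KB{\sigma[BX]}(f[X])=\sigma[-B]\cdot f[X(B+1)].
\]
This immediately gives $\KB{\sigma[BX]}\,\UU{A}=\UU{A(B+1)}\,\KB{\sigma[BX]}$, and extracting the coefficient of $s_\alpha[A]s_\beta[B]$ produces the double sum over $\tau$ and $\nu$: the $\nu$ comes from expanding the $\KB{}$ series, and the $\tau$ from expanding $s_\alpha[(B+1)U]=\sum_\tau s_\tau[BU]\,s_{\alpha/\tau}[U]$. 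Without this vertex-operator input (or an equivalent handle on $\KB{}$ at the generating-series level), your sketch for (\ref{KKKU2}) has no clear path to completion.
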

To answer our  set of questions, and in particular, to prove these identities, we associate a generating series to each of our operators. Then using known operations of formal series we are able to obtain a uniform and elegant method to tackle them.   More precisely,  let $P$ be any of operators $\U{}$, $\D{}$, $\K{}$ and $\KB{}$. We associate to  $P$  a formal series of operators
$
\sum_{\lambda} s_{\lambda}[A] P_{\lambda}.
$ We  call this series the  Schur generating series of $P$. 
The Schur generating series of $P$ defines a linear map that sends any symmetric function $g \in \Sym(X)$  to the expression
$
\sum_{\lambda} s_{\lambda}[A]  P_{\lambda}(g).
$
The operator $P_\lambda$ can be recovered from this series using the scalar product  on $\Sym$.

%For each of the four operators under consideration, the effect of the Schur generating function operator can be  described nicely by means of operations on alphabets, as summarized in Lemma \ref{lemma:effects}. Then,  we  describe elegant commutator relations we are looking for at the level of generating series.
%By extracting coefficients by means of the appropriate scalar products we obtain our the following commutator relations:

%In  Section \ref{sec:proofs} we show the following lemma.
%
%\begin{lemma}
%Let $A$ and $B$ be two alphabets. We have
%
%\noindent
%For the pair $(U, D)$
%{\allowdisplaybreaks
%\begin{align}
%\DD{B}\UU{A}&=\ss[AB] \UU{A}\DD{B},  \label{eqDU}\\
%\UU{A}\DD{B}&=\ss[-AB] \DD{B}\UU{A}.\label{eqUD}
%\end{align}
%}
%
%\noindent
%For the pair $(K,U)$
%\begin{equation}\label{KKUU}
%\KK{B} \UU{A}= \UU{AB} \KK{B}. 
%\end{equation}
%
%\noindent
%For the pair $(\KB,U)$
%\begin{equation}\label{GKbarU}
%\KB{\ss[BX]} \UU{A} = \UU{A(B+1)} \KB{\ss[BX]}.
%\end{equation}
%\end{lemma}

The first part of our paper concludes with the expansion of the identities in Theorem 1.1 in the Schur basis, and are summarized in Theorem \ref{thm:main-cor}. These expansions give  normal ordering relations for the operators $U, D, K,$ and $ \KB{}$.

An important question remains. Are such expressions unique?  We consider this question in  Proposition \ref{finite expansions}, where  we show that 
finite expansions with respect to the pairs $(U,D)$, $(D,U)$, $(U,K)$, $(K,D)$, $(U,\KB{})$ and $(\KB{},D)$ are unique.
In contrast, observe that expansions with respect to  $(K,U)$ or to $(D,K)$ are not unique. For instance we have the relation $\K{p_2} \U{p_1} =0$, that is straightforwardly equivalent to the relation $\K{2}\U{1} = \K{1,1}\U{1}$. Taking adjoints, we have the relation $\D{1}\K{2} = \D{1} \K{1,1}$.

%XXXXXXXXXShe we call them "finite expansions" or better "normal ordering relationships"?

The following charming and well-known identity $
\KB{(1)}=\U{(1)}\D{(1)}-1$  describes a  relation between the operators $\KB{}, U,$ and $D$.
 It translate to one of the few known cases where it is possible to give a combinatorial description for a Kronecker product. 
We  extend this identity to similar expression for  the Kronecker product of an arbitrary Skew Schur function with $s_{(n-1,1)}$, and for other simple families of Schur functions (hooks and two--row shapes).  

Remarkably, the previous identity shows that $\KB{(1)}$ can be rewritten it in terms of the much simpler operators $\U{(1)}$ and $\D{(1)}$.
In Proposition \ref{KBf in U D} we vastly generalize this observation, and show that for any symmetric function $f$ the operator $\KB{f}$ lies in the subalgebra of $\operatorname{End}(\Sym)$ generated by the operators $U_g$ and $D_g$.

Dealing with naturally defined objects, like our families of operators, you are bound to recover some classical results. A testimony of the elegance of this approach is that both Foulkes's and Littlewood's identities can be easily derived from Theorem \ref{thm:main-cor}.  However, a  new identity of the same nature is also obtained, as described  in the following table.  This is discussed in Section \ref{sec:commutations}.

\begin{table}[htb]
\begin{center}
\begin{tabular}{llc}
Product  & Coproduct                       & equivalent identities \\[2mm] \hline
Ordinary, $\cdot$ & Adjoint of the ordinary product & \eqref{eqDbetaUalpha} and \eqref{TrueFoulkes} \\
Kronecker, $\ast$ & Adjoint of the ordinary product & \eqref{KU2} and  \eqref{TrueLittlewood}\\ 
Ordinary, $\cdot $ & Adjoint of the Kronecker product & \eqref{DK2} and \eqref{Similar}
\end{tabular}
\end{center}
\caption{The three bialgebra structures on $\Sym$ and the corresponding identities $\partial_x \circ \mu  = \mu \circ \partial_{\Delta(x)}$.}\label{table pro copro}
\end{table}
We finish our work with  two combinatorial applications of our identities. The first one is a proof of the \emph{skew Littlewood--Richardson Rule}, a combinatorial rule that gives the product of two skew Schur functions as a linear combination of skew Schur functions, based on counting Young tableaux (Theorem~\ref{eqskewLR}). This rule was conjectured in \cite{AssafMcNamara}, and proved in \cite{LamLauveSottile}. Our proof relies on the normal ordering relation that decomposes the products $\U{\alpha} \D{\beta}$ as linear combinations of products of the form $\D{\beta/\lambda'} \U{\alpha/\lambda}$. It generalizes the algebraic proof given by Thomas Lam of the skew Pieri Rule (a particular case of the skew Littlewood--Richardson Rule) in the appendix of \cite{AssafMcNamara}. Indeed, Lam's proof relies on the same normal ordering relation, in the special case of $\beta$ having only one part. 

The second application exploits our normal ordering relation for the products $\KB{1} D_{\lambda}$. We extend the combinatorial rule for the expansion in the Schur basis of the Kronecker product of $s_{(n-1,1)}$ with a Schur function,  to the Kronecker product of $s_{(n-1,1)}$ with any skew Schur function (Theorem \ref{thm:skewcorners}).  Additionally, we give a different, combinatorial, proof of this result.%  In \cite{GesselKron}, Ira Gessel considered the Kronecker product between Schur functions indexed by zig-zag shapes, that are particular instances of skew shapes.

\section{Statement of the results and Historical context}\label{sec:commutations}

The main result of the paper is a list of six normal ordering relations,  stated in two equivalent ways in Theorems \ref{thm:main} and \ref{thm:main-cor}. These relations will be proved in Section \ref{sec:proofs}.  
%
%\begin{theorem}\label{thm:main}
%For any partitions $\alpha$ and $\beta$ we have the following identities
%(where $\lambda$, $\tau$ and $\nu$ each run over the set of all partitions).
%\begin{align}
%\D{\beta}\U{\alpha}&=
%\sum_{\lambda} \U{{\alpha/\lambda}} \D{{\beta/\lambda}} \label{eqDbetaUalpha}\\
%%%%%%%%%%%%
%\U{\alpha}\D{\beta}&=\sum_{\lambda} (-1)^{|\lambda|} \D{{\beta/\lambda'}} \U{{\alpha/\lambda}}\label{eqUalphaDbeta}\\
%%%
%%%%%%%%%%%%%%
%\K{\beta} \U{\alpha} &=\sum_{\lambda} \U{s_{\beta/\lambda} \ast s_{\alpha}} \K{\lambda} \label{KU2}\\
%%%
%%%%%
%\D{\alpha}\K{\beta} &=\sum_{\lambda} \K{\lambda} \D{s_{\beta/\lambda} \ast s_{\alpha}}   \label{DK2}\\
%%%
%%%%%%%%%%%%%%%%%
%\KB{\beta} \U{\alpha}&=
%\sum_{\tau,\nu}   \U{(s_{\beta/\nu} \ast s_{\tau}) s_{\alpha/\tau}} \KB{\nu} \label{KKKU2}\\
%%%
%%%%%%%%%%%%
%\D{\alpha} \KB{\beta} &=
%\sum_{\tau,\nu} \KB{\nu}  \D{(s_{\beta/\nu} \ast s_{\tau}) s_{\alpha/\tau}}\label{DKKK2}
%%%
%\end{align}
%\end{theorem}

\begin{theorem}\label{thm:main-cor} For any partitions $\alpha$ and $\beta$ we have the following identities.
\begin{align*}
\D{\beta}\U{\alpha}&=
\sum_{\mu,\nu} 
\left(\sum_{\lambda}  c^{\alpha}_{\lambda,\mu} \;c^{\beta}_{\lambda,\nu}\right) 
\U{\mu}\D{\nu}\\
%%%%%%%%%%%
\U{\alpha}\D{\beta}
&=
\sum_{\mu,\nu} 
\left(\sum_{\lambda}  (-1)^{|\lambda|} c^{\alpha}_{\lambda,\mu} \; c^{\beta}_{\lambda',\nu}\right) 
\D{\nu}\U{\mu}\\
%%%%%%%%%%%%%
\K{\beta} \U{\alpha} &=
\sum_{\mu,\nu} 
\left(\sum_{\lambda} g_{\alpha,\lambda,\mu} \; c^{\beta}_{\lambda,\nu} \right) 
\U{\mu}\K{\nu}\\
%%%%
\D{\alpha}\K{\beta} &=
\sum_{\mu,\nu} 
\left(\sum_{\lambda} g_{\alpha,\lambda,\mu} \; c^{\beta}_{\lambda,\nu} \right) 
\K{\nu}\D{\mu}\\
%%%%%%%%%%%%%%%%
\KB{\beta} \U{\alpha}&=
\sum_{\mu,\nu} 
\left( \sum_{\lambda, \sigma, \tau, \theta}
g_{\lambda,\tau,\theta} \; c^{\beta}_{\lambda,\nu} \; c^{\alpha}_{\tau,\sigma} \; c_{\theta,\sigma}^{\mu} \right) \U{\mu} \KB{\nu}\\
%%%%%%%%%%%
\D{\alpha} \KB{\beta} &=
 \sum_{\mu,\nu} 
\left( \sum_{\lambda, \sigma, \tau, \theta}
g_{\lambda,\tau,\theta} \; c^{\beta}_{\lambda,\nu} \; c^{\alpha}_{\tau,\sigma} \; c_{\theta,\sigma}^{\mu} \right) \KB{\nu} \D{\mu}
\end{align*}
\end{theorem}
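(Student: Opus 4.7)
The plan is to derive Theorem~\ref{thm:main-cor} directly from Theorem~\ref{thm:main} by expanding each ``compound'' operator appearing on the right-hand sides of \eqref{eqDbetaUalpha}--\eqref{DKKK2} into a linear combination of operators indexed by partitions. The only tools needed are the linearity of $\U{f}$, $\D{f}$, $\K{f}$, $\KB{f}$ in $f$ together with the three standard structure-constant expansions: the Littlewood--Richardson rule $s_{\alpha/\lambda} = \sum_\mu c^{\alpha}_{\lambda,\mu}\, s_\mu$, the ordinary product $s_\mu s_\nu = \sum_\rho c^{\rho}_{\mu,\nu}\, s_\rho$, and the Kronecker expansion $s_\alpha \ast s_\beta = \sum_\mu g_{\alpha,\beta,\mu}\, s_\mu$. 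The symmetries $c^{\alpha}_{\mu,\nu} = c^{\alpha}_{\nu,\mu}$ and the full symmetry of $g_{\alpha,\beta,\gamma}$ in its three arguments will be used to match the indices as displayed in Theorem~\ref{thm:main-cor}.

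For the first two identities the argument is immediate: in \eqref{eqDbetaUalpha} I would replace $\U{\alpha/\lambda}$ by $\sum_\mu c^{\alpha}_{\lambda,\mu}\, \U{\mu}$ and $\D{\beta/\lambda}$ by $\sum_\nu c^{\beta}_{\lambda,\nu}\, \D{\nu}$, reorder the resulting triple sum, and collect the coefficient of each $\U{\mu}\D{\nu}$; for \eqref{eqUalphaDbeta} the same substitution works, only carrying the sign $(-1)^{|\lambda|}$ and the transpose $\lambda \mapsto \lambda'$ through the computation. For \eqref{KU2}, a one-line computation gives
\[
s_{\beta/\lambda} \ast s_\alpha = \sum_{\rho,\mu} c^{\beta}_{\lambda,\rho}\, g_{\rho,\alpha,\mu}\, s_\mu,
\]
so that applying $\U{\bullet}$ and relabelling $\lambda \to \nu$, $\rho \to \lambda$ produces the stated coefficient $\sum_\lambda g_{\alpha,\lambda,\mu}\, c^{\beta}_{\lambda,\nu}$ after invoking the symmetries of $c$ and $g$. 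Identity \eqref{DK2} follows by the same computation on the adjoint side.

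The last two identities combine all three kinds of structure constants simultaneously. For \eqref{KKKU2} I would expand
\[
(s_{\beta/\nu} \ast s_\tau)\, s_{\alpha/\tau}
= \sum_{\lambda,\theta,\sigma,\mu} c^{\beta}_{\nu,\lambda}\, g_{\lambda,\tau,\theta}\, c^{\alpha}_{\tau,\sigma}\, c^{\mu}_{\theta,\sigma}\, s_\mu,
\]
apply $\U{\bullet}$, and collect the coefficient of $\U{\mu}\KB{\nu}$; the remaining sum over $\lambda,\sigma,\tau,\theta$ is exactly the one in the statement once the symmetry $c^{\beta}_{\nu,\lambda} = c^{\beta}_{\lambda,\nu}$ is applied. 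Identity \eqref{DKKK2} is obtained identically on the adjoint side. I do not foresee any conceptual difficulty; the only mildly delicate point is the bookkeeping in the last two cases, where five or six summation variables have to be tracked in parallel and the symmetries of $c$ and $g$ invoked in the correct places.
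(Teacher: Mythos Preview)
Your proposal is correct and matches the paper's approach exactly: the paper presents Theorem~\ref{thm:main-cor} as nothing more than the Schur-basis expansion of the identities in Theorem~\ref{thm:main}, and your outline carries out precisely that expansion using the linearity of the operators together with the defining expansions $s_{\alpha/\lambda}=\sum_\mu c^{\alpha}_{\lambda,\mu}s_\mu$, $s_\mu s_\nu=\sum_\rho c^{\rho}_{\mu,\nu}s_\rho$, and $s_\alpha\ast s_\beta=\sum_\mu g_{\alpha,\beta,\mu}s_\mu$. The relabellings and uses of the symmetries of $c$ and $g$ that you indicate are all valid.
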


Identities \eqref{eqDbetaUalpha} and \eqref{KU2} are avatars of well-known identities of  Foulkes and Littlewood. 
 Indeed, if we apply the operators in \eqref{eqDbetaUalpha} and \eqref{KU2} to the Schur function $s_{\gamma}$ we get
\begin{align}
D_{\beta}(s_{\alpha}s_{\gamma})&=\sum_{\lambda} s_{\alpha/\lambda} \D{\beta/\lambda}(s_{\gamma}),\label{Foulkes}\\
s_{\beta}\ast (s_{\alpha} s_{\gamma}) &=\sum_{\lambda} (s_{\beta/\lambda} \ast s_{\alpha}) (s_{\lambda} \ast s_{\gamma}).\label{equ:littlewood}
\end{align}
By linearity, we can replace $s_{\alpha}$ and $s_{\gamma}$ with arbitrary symmetric functions $f$ and $g$. Also if we  expand $s_{\beta/\lambda}=\sum_{\mu} c^{\beta}_{\lambda, \mu} s_{\mu}$, we obtain
\begin{align}
\D{\beta}(fg)&=\sum_{\lambda,\mu} c^{\beta}_{\lambda,\mu} \D{\lambda}(f) \D{\mu}(g),  \label{TrueFoulkes}\\
s_{\beta}\ast (fg) &=\sum_{\lambda, \mu} c^{\beta}_{\lambda, \mu} (s_{\mu} \ast f)(s_{\lambda} \ast g).
\label{TrueLittlewood}
\end{align}
Formula \eqref{TrueFoulkes} was obtained by Foulkes (\cite[\S3.b]{FoulkesDifferentialOperators} , also mentioned in \cite[I.\S5 Ex.~25.(d)]{MacdonaldBook}, while \eqref{TrueLittlewood} is due to Littlewood (\cite[Theorem III]{Littlewood1956}, see also \cite[I.\S7 Ex.~23.(c)]{MacdonaldBook}). 
%% new text
An expression similar to \eqref{TrueFoulkes} and \eqref{TrueLittlewood} can be derived the same way from \eqref{DK2}. This is:
\begin{equation}\label{Similar}
D_{\beta}(f \ast g) = \sum_{\lambda} g_{\beta, \lambda, \mu} D_{\lambda}(f) \ast D_{\mu}(g).
\end{equation}
The similarity between \eqref{TrueFoulkes}, \eqref{TrueLittlewood} and \eqref{Similar} has a nice explanation. It is provided by J.--Y. Thibon for \eqref{TrueFoulkes} and \eqref{Similar}, see \cite[p.554]{ThibonCoproductCR}, \cite[Proposition 6.4]{ThibonHopf}, and \cite{ThibonSlides}, but applies as well for \eqref{TrueLittlewood}. The explanation is as follows: let $B$ be a bialgebra with product $\odot$ and coproduct $\Delta$. The coproduct $\Delta$ induces a product $\mu$ on the dual space $B^*$. For any $x \in B$ or in $B \otimes B$, let $\partial_x$ be the adjoint of the $\odot$--product by $x$. Then, for any $x \in B$:
\begin{equation}\label{Thibon}
\partial_x \circ \mu  = \mu \circ \partial_{\Delta(x)} 
\end{equation}
See the aforementionned references by J.--Y. Thibon for a proof.

Consider $\Sym$ with a product that is either the ordinary product or the Kronecker product, and a coproduct that is either the adjoint of the ordinary product, or the adjoint of the ordinary coproduct. This gives four possibilities, but only three of them make $\Sym$ a bialgebra. The three identities  \eqref{TrueFoulkes}, \eqref{TrueLittlewood} and \eqref{Similar} are obtained by applying \eqref{Thibon} to these three bialgebra structures, with $x=s_{\beta}$. See table \ref{table pro copro}. Note that for any symmetric function $f$, the operator $K_f$ is its own adjoint.

Thibon also relates in \cite{ThibonSlides} Identity \eqref{TrueLittlewood} to Mackey's formula in group theory (see \cite[18.15]{Hazewinkel}). 
Let us mention that Cummins \cite[identity (13)]{Cummins} also derives an identity very close to \eqref{DK2}.
%% end of new text

Formula \eqref{eqUalphaDbeta} can be stated as 
\begin{equation}
s_{\alpha} s_{\gamma/\beta}=\sum_{\lambda} (-1)^{|\lambda|} \D{\beta/\lambda'}(s_{\alpha/\lambda}s_{\gamma}). \label{ReverseFoulkes}
\end{equation}
Formula \eqref{eqUalphaDbeta} happens to be closely related  to the \emph{Skew Littlewood--Richardson Rule}, see section \ref{SkewLRrule}. In \cite[Lemma~1.1]{LamLauveSottile} Lam, Lauve, and Sottile obtain a more general version of Formula \eqref{eqUalphaDbeta} valid for arbitrary pairs of dual Hopf algebras.

As mentioned in the introduction, Ira Gessel, \cite{Gessel},  established special cases of \eqref{eqDbetaUalpha} and \eqref{eqUalphaDbeta} when the Schur functions are indexed by one-row or one-column shapes. He showed that 
\begin{eqnarray*}
&&D_n U_m = \sum_i U_{m-i} D_{n-i}\,, \\
&&U_m D_n= D_n U_m - D_{n-1} U_{m-1}\,,\\
&&D_{(1^n)} U_m= U_m D_{(1^n)} + U_{m-1} D_{(1^{n-1})}.
\end{eqnarray*}

Since $\K{\beta}$ and $\KB{\beta}$ are self-adjoint, and $U_{\alpha}$ and $D_{\alpha}$ are adjoint of each other, \eqref{DK2} and \eqref{DKKK2} are 
obtained from \eqref{KU2} and \eqref{KKKU2} respectively by taking adjoints.

An interesting and elegant way of stating some of the results of Theorem \ref{thm:main} is in terms of commutators. 
\begin{corollary}\label{commutators}
For any two partitions $\alpha$ and $\beta$, we have
\begin{align*} 
[\D{\beta}, \U{\alpha}]
&= \sum_{\lambda \neq (0)} \U{\alpha/\lambda} \D{\beta/\lambda} = \sum_{\lambda \neq (0)} (-1)^{|\lambda|-1} \D{\beta/\lambda'} \U{\alpha/{\lambda}}\,,\\
%%%                  
[\KB{\beta}, \U{\alpha}]&=
\sum_{(\tau,\nu) \neq ((0),\beta)}   \U{(s_{\beta/\nu} \ast s_{\tau}) s_{\alpha/\tau}} \KB{\nu}\,, \\
%%%%%%%%%%%
[\D{\alpha}, \KB{\beta}] &=
\sum_{(\tau,\nu) \neq ((0),\beta)} \KB{\nu}  \D{(s_{\beta/\nu} \ast s_{\tau}) s_{\alpha/\tau}}\,,
\end{align*}
where $(0)$ denotes the empty partition.
\end{corollary}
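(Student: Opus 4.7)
The plan is to read off each commutator identity directly from the corresponding identity in Theorem \ref{thm:main} by isolating a single ``trivial'' term on the right hand side. For the first pair of identities, I would start from \eqref{eqDbetaUalpha}, namely $\D{\beta}\U{\alpha} = \sum_{\lambda} \U{\alpha/\lambda} \D{\beta/\lambda}$. The term $\lambda = (0)$ contributes $\U{\alpha/(0)}\D{\beta/(0)} = \U{\alpha}\D{\beta}$, since $s_{\alpha/\emptyset} = s_{\alpha}$ and likewise for $\beta$. Moving that term to the left hand side gives the first expression for $[\D{\beta},\U{\alpha}]$. For the second expression, I would use \eqref{eqUalphaDbeta} instead: its $\lambda=(0)$ term is $(-1)^0 \D{\beta}\U{\alpha} = \D{\beta}\U{\alpha}$, so subtracting and multiplying by $-1$ produces $[\D{\beta},\U{\alpha}] = \sum_{\lambda\neq (0)} (-1)^{|\lambda|-1}\D{\beta/\lambda'}\U{\alpha/\lambda}$.

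For the $\KB{}$--commutators I would argue analogously. Starting from \eqref{KKKU2}, I examine the pair $(\tau,\nu) = ((0),\beta)$: here $s_{\beta/\nu} = s_{\beta/\beta} = 1$, $s_{\tau} = s_{(0)} = 1$, and $s_{\alpha/\tau} = s_{\alpha}$, so the coefficient of $\KB{\beta}$ in the index reduces to $(1 \ast 1)\cdot s_{\alpha} = s_{\alpha}$, giving the summand $\U{\alpha}\KB{\beta}$. Transposing this term produces the stated formula for $[\KB{\beta},\U{\alpha}]$. The identity for $[\D{\alpha},\KB{\beta}]$ is obtained in exactly the same way from \eqref{DKKK2}, with the same distinguished pair $((0),\beta)$ yielding $\KB{\beta}\D{\alpha}$.

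The only non-obvious point is checking that no other pair $(\tau,\nu)$ in the sums for the $\KB{}$--identities also produces a term equal to $\U{\alpha}\KB{\beta}$ or $\KB{\beta}\D{\alpha}$ (which would allow multiple ways of writing the commutator). This is not strictly needed for the corollary as stated, but the uniqueness of the expansion with respect to the pairs $(U,\KB{})$ and $(\KB{},D)$ established in Proposition \ref{finite expansions} ensures the given form is canonical. I do not expect any genuine obstacle: the proof is a bookkeeping exercise in extracting the identity term from each of the six normal-ordering expansions.
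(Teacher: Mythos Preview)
Your proposal is correct and matches the paper's approach: the corollary is stated without proof precisely because it follows by isolating the $\lambda=(0)$ term (respectively the $(\tau,\nu)=((0),\beta)$ term) in each identity of Theorem~\ref{thm:main}, exactly as you describe. Your final paragraph on uniqueness is extraneous to the corollary as stated but harmless.
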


%%%%%%%%%%%%%%%%%%%%%%%%%%%%%%%%%%%%%%%%%%%%%%%%%%%%%%%%%%%%
%%%%%%%%%%%%%%%%%%%%%%%%%%%%%%%%%%%%%%%%%%%%%%%%%%%%%%%%%%%%
%%%%%%%%%%%%%%%%%%%%%%%%%%%%%%%%%%%%%%%%%%%%%%%%%%%%%%%%%%%%
                                  %%%%%%%%%%%%% U and D &&&&&&&&&&&&&&
%%%%%%%%%%%%%%%%%%%%%%%%%%%%%%%%%%%%%%%%%%%%%%%%%%%%%%%%%%%%
%%%%%%%%%%%%%%%%%%%%%%%%%%%%%%%%%%%%%%%%%%%%%%%%%%%%%%%%%%%%

\section{Proof of the identities}\label{sec:proofs}

%%%%%%%%%%%%%%%%%%%%%%%%%%%%%%%%%%%%%%%%%%%%%%%%%%%%%%%%%%%%%%%%%

In this section, we prove all six identities of Theorem \ref{thm:main}.   We begin with an overview of the method of proof.   Let $P$ be any of the families of operators $\U{}$, $\D{}$, $\K{}$ and $\KB{}$. 
These operators act on $\Sym=\Sym(X)$, the symmetric functions in some alphabet $X$.  Introduce an auxiliary alphabet $A$. The Schur generating series of $P$ is defined as
\[
\sum_{\lambda} s_{\lambda}[A] P_{\lambda} .
\]
This can be interpreted as the linear map that sends any symmetric function $g \in \Sym(X)$  to the expression
\[
\sum_{\lambda} s_{\lambda}[A]  P_{\lambda}(g).
\]
For each of the four operators under consideration, the effect of the Schur generating function operator is described nicely by means of operations on alphabets (Lemma \ref{lemma:effects}). The identities of Theorem \ref{thm:main} are derived at the level of  generating series. The result is then recovered by extracting coefficients by means of the appropriate scalar products.

\subsection{Preliminary: operations on alphabets}

 Let $X=\{x_1, x_2, \ldots\}$ be the underlying alphabet for the symmetric functions in $\Sym$. Any infinite alphabet $A$  gives rise to a copy $\Sym(A)$ of $\Sym$. In this copy, the corresponding scalar product will be denoted by $\scalar{}{}_A$ and the element corresponding to $f\in\Sym$, by $f[A]$. 
Accordingly, the scalar product $\scalar{}{}$ of $\Sym$ and elements $f\in\Sym$ will be denoted sometimes by $\scalar{}{}_X$ and $f[X]$. 

If $A$ and $B$ are two alphabets, the tensor product $\Sym(A) \otimes \Sym(B)$ is endowed with the induced scalar product $\scalar{}{}_{A,B}$\,. 

Both the  Kronecker product $\ast$ and the adjoint $D_f$ of the operator of multiplication by a symmetric function $f$ will be only considered with respect to $\Sym=\Sym(X)$.

Given a morphism of algebras $A$ from $\Sym$ to some commutative algebra $\mathcal{R}$, it will be convenient to write it as 
 $f \mapsto f[A]$ (rather than $f \mapsto A(f)$) for any \emph{morphism of algebras}  and consider it as a ``specialization at the virtual alphabet $A$.'' 

Since the power sum symmetric functions $p_k$ ($k \geq 1$) generate $\Sym$ and are algebraically independent, the map 
\begin{equation}\label{bijection}
A \mapsto (p_1[A], p_2[A], \ldots)
\end{equation}
is a bijection from the set of all morphisms of algebras from $\Sym$ to $\mathcal{R}$ 
to the set of infinite sequences of elements from $\mathcal{R}$. This set of sequences is endowed with its operations of component-wise sum and product, and multiplication by a scalar. The bijection \eqref{bijection} is used to lift these operations to the set of morphisms from $\Sym$ to $\mathcal{R}$. This defines expressions like $f[A+B]$ and $f[AB]$, where $f$ is a symmetric function and $A$ and $B$ are two ``virtual alphabets,'' and more general expressions $f[P(A,B,\ldots)]$ where $P(A,B,\ldots)$ is a polynomial in several virtual alphabets $A$, $B$ \ldots with coefficients in the base field. Note that, by definition, for any power sum $p_k$ ($k \geq 1$), virtual alphabets $A$ and $B$, and scalar $z$,
\[
p_k[A+B]=p_k[A]+p_k[B], \qquad p_k[AB]=p_k[A] \cdot p_k[B], \qquad p_k[zA]=z \; p_k[A].
\]
In our calculations below, the morphism $f \mapsto f[1]$ will appear: it is the specialization at $x_1=1$, $x_2=0$, $x_3=0$ \ldots, sending each $p_k$ to $1$. The morphism $f \mapsto f[X]$ is just the identity of $\Sym$. The morphism $f \mapsto f[X^{\perp}] = f^{\perp}=D_f$ associates to $f$ the adjoint of the operator "multiplication by $f$".

Let $\ss$ be the generating series for the complete homogeneous symmetric functions $h_n$,  meaning
$\ss=\sum_{n=0}^{\infty} h_n$\,, where $h_0=1$. Recall from \cite[I.\S2]{MacdonaldBook} that we also have 
\[
\ss=\exp\left(\sum_{k=1}^{\infty} \frac{p_k}{k}  \right)=\sum_{\lambda} \frac{p_{\lambda}}{z_{\lambda}}
\]
where the last sum is carried over all partitions $\lambda$.

We will make use of the following known identities, whose proofs we include for the sake of completeness.
\begin{lemma} \label{properties:ss}
Let $A$ and $B$  be any two alphabets, and $f$ and $g$ be any two symmetric functions. Then we have the following identities.
\begin{align}
&\ss[A+B]=\ss[A] \ss[B]&&  \label{factorization}\\
&\ss[AB]=\sum_{\lambda} s_{\lambda}[A] s_{\lambda}[B] && \text{ (Cauchy Identity)} \label{Cauchy}\\
&\ss[-AB]=\sum_{\lambda} (-1)^{|\lambda|} s_{\lambda'}[A] s_{\lambda}[B] && \label{dual Cauchy}\\
&\D{\ss[AX]}(f[X])=f[X+A] && \label{sum}\\
&\ss[AX] \ast f[X] =f[AX] && \label{product}\\
&\scalar{f[AX]}{g[X]}_X=(f\ast g)[A] && \label{scalar}\\
&\scalar{\ss[AB]}{g[B]}_B=g[A] && \text{ (Reproducing Kernel)}\label{reproducing}
\end{align}
\end{lemma}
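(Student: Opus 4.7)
The plan is to translate each identity into a statement about power sums, where plethysm acts transparently via $p_k[A+B]=p_k[A]+p_k[B]$, $p_k[AB]=p_k[A]\,p_k[B]$, and $p_k[-A]=-p_k[A]$. I will rely throughout on the expansions $\ss=\exp\bigl(\sum_{k\geq 1} p_k/k\bigr)=\sum_\lambda p_\lambda/z_\lambda$, on the Hall orthogonality $\scalar{p_\mu}{p_\nu}=\delta_{\mu\nu}\,z_\mu$, and on the Kronecker orthogonality $p_\mu\ast p_\nu=\delta_{\mu\nu}\,z_\mu\,p_\mu$.

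After this translation, six of the seven identities reduce to elementary bookkeeping. Identity~\eqref{factorization} follows from the exponential form of $\ss$ together with additivity of $p_k$. For the Cauchy identity~\eqref{Cauchy}, write $\ss[AB]=\sum_\mu p_\mu[A]\,p_\mu[B]/z_\mu$ and recognize this as the reproducing kernel of $\scalar{\cdot}{\cdot}$; in any orthonormal basis such a kernel takes the form $\sum_i e_i[A]\,e_i[B]$, so applied to the Schur basis it yields $\sum_\lambda s_\lambda[A]\,s_\lambda[B]$. The dual Cauchy~\eqref{dual Cauchy} follows by substituting $A\mapsto-A$ and invoking $s_\lambda[-A]=(-1)^{|\lambda|}\,s_{\lambda'}[A]$, itself a consequence of $p_k[-A]=-p_k[A]$ and the classical formula for $\omega(s_\lambda)$. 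For~\eqref{product}, write $f=\sum_\mu c_\mu\,p_\mu$ and $\ss[AX]=\sum_\mu p_\mu[A]\,p_\mu[X]/z_\mu$; Kronecker orthogonality collapses the product to $\sum_\mu c_\mu\,p_\mu[A]\,p_\mu[X]=f[AX]$. For~\eqref{scalar}, both sides expand under Hall orthogonality to $\sum_\mu c_\mu\,d_\mu\,z_\mu\,p_\mu[A]$, where $g=\sum_\nu d_\nu\,p_\nu$. For the reproducing kernel~\eqref{reproducing}, combine~\eqref{Cauchy} with the Schur expansion $g=\sum_\lambda \scalar{g}{s_\lambda}\,s_\lambda$ and use orthonormality of the Schur basis.

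The only step that requires more than power-sum bookkeeping is~\eqref{sum}. My plan is to use~\eqref{Cauchy} to rewrite $\D{\ss[AX]}(f)=\sum_\lambda s_\lambda[A]\,\D{s_\lambda}(f)$, and then to prove the identity $f[X+A]=\sum_\lambda s_\lambda[A]\,\D{s_\lambda}(f)[X]$ by verifying it on the Schur basis $f=s_\mu$ and extending linearly. The Hopf coproduct of $\Sym$ gives $s_\mu[X+A]=\sum_{\lambda,\nu} c^{\mu}_{\lambda,\nu}\,s_\lambda[A]\,s_\nu[X]$, while the skewing operator satisfies $\D{s_\lambda}(s_\mu)=s_{\mu/\lambda}=\sum_\nu c^{\mu}_{\lambda,\nu}\,s_\nu$, so the two expressions match term by term. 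This coproduct/skewing step is the only moderately non-routine input; the remainder of the lemma is pure power-sum calculus.
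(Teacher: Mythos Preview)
Your argument is correct. Each of the seven identities is established by the standard device of expanding in power sums, where plethysm acts multiplicatively and additively on the $p_k$, together with the Hall orthogonality $\scalar{p_\mu}{p_\nu}=\delta_{\mu\nu}z_\mu$ and the Kronecker relation $p_\mu\ast p_\nu=\delta_{\mu\nu}z_\mu p_\mu$. The only identity requiring an additional ingredient is~\eqref{sum}, and your reduction via the Cauchy expansion $\D{\ss[AX]}=\sum_\lambda s_\lambda[A]\,\D{s_\lambda}$ together with the coproduct formula $s_\mu[X+A]=\sum_{\lambda,\nu}c^{\mu}_{\lambda\nu}\,s_\lambda[A]\,s_\nu[X]$ and the defining relation $\D{s_\lambda}(s_\mu)=s_{\mu/\lambda}$ is the natural way to handle it.

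As for comparison: the paper announces that it includes the proofs ``for the sake of completeness,'' but in the source provided the proof of Lemma~\ref{properties:ss} is in fact absent---the text passes directly from the statement to the next subsection. There is therefore nothing to compare against. That said, your power-sum approach is the canonical one for these identities (it is essentially the route taken in Macdonald~\cite{MacdonaldBook}, Chapter~I), and is almost certainly what the authors had in mind. One minor stylistic remark: for~\eqref{sum} you could bypass the Schur-basis verification altogether by working directly in power sums, since $\D{p_k}=k\,\partial/\partial p_k$ and hence $\D{\ss[AX]}=\exp\bigl(\sum_k p_k[A]\,\partial/\partial p_k\bigr)$ acts on $f$ as the translation $p_k\mapsto p_k+p_k[A]$; but your coproduct argument is equally valid and arguably more transparent.
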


\subsection{Generating series}

Let $P$ be any of the families of operators $U$, $D$, $K$ and $\KB{}$. Introduce an auxiliary alphabet $A$ and the following generating series for $P$:
\[
\sum_{\lambda} s_{\lambda}[A] P_{\lambda}.
\]
We use the linearity of $f \mapsto P_f$ to simplify this expression as follows:
\begin{align*}
\sum_{\lambda} s_{\lambda}[A] P_{\lambda} 
= \sum_{\lambda} s_{\lambda}[A]  P_{s_\lambda[X]} 
= P_{\sum_{\lambda} s_{\lambda}[A] s_\lambda[X] } 
= P_{\ss[AX]}
\end{align*}
by the Cauchy Identity \eqref{Cauchy}.

Note that any operator $P_{\lambda}$ can be recovered from the generating series with a coefficient extraction by means of a scalar product:
\[
P_{\lambda}(f) = \scalar{P_{\ss[AX]}(f[X])}{s_{\lambda}[A]}_A\,.
\]

The generating series $P_{\ss[AX]}$ also acts linearly on symmetric functions. The following lemma describes the effect of all four generating series $\U{\ss[AX]}$, $\D{\ss[AX]}$, $\K{\ss[AX]}$ and $\KB{\ss[AX]}$.

\begin{lemma}\label{lemma:effects}
Let $f[X]$ be any symmetric function.
\begin{align}
\U{\ss[AX]}(f[X]) &= \ss[AX] \cdot f[X] \label{effectU}\\
\D{\ss[AX]}(f[X]) &= f[X+A] \label{effectD} \\
\K{\ss[AX]}(f[X]) &= f[AX]  \label{effectK} \\
\KB{\ss[AX]}(f[X]) &= \ss[-A] \cdot f[X(A+1)]\label{effectKbar}
\end{align} 
\end{lemma}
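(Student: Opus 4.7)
The first three identities are either definitional or direct restatements of properties already collected in Lemma \ref{properties:ss}. Identity \eqref{effectU} is just the definition of $\U{g}$ as multiplication by $g$, applied to $g = \ss[AX]$. Identity \eqref{effectD} is precisely \eqref{sum}, and \eqref{effectK} is precisely \eqref{product}, once the notation $P_{\ss[AX]}$ is unfolded.

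The substantive content is \eqref{effectKbar}. My plan is to first establish the closed form
\[
\sum_{n \geq 0} s_{(n-|\beta|,\, \beta)}[X] \;=\; \ss[X] \cdot s_{\beta}[X-1]
\]
for every partition $\beta$. The idea is to Laplace-expand the Jacobi--Trudi determinant for $s_{(n-|\beta|,\beta)}$ along its first row, whose entries are the $h_{n-|\beta|+j-1}$; the resulting minors $M_j$ depend only on $\beta$. Summing over $n \geq 0$ and using $\sum_{n \geq 0} h_{n+c} = \ss$ (which holds for each relevant shift $c = -|\beta|+j-1 \leq 0$, since $h_k = 0$ for $k<0$), the sum factors as $\ss \cdot \sum_j (-1)^{j+1} M_j$. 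By cofactor expansion the alternating sum of minors equals the determinant of the Jacobi--Trudi matrix for $\beta$ with its first row replaced by the all-ones row. A single column sweep ($C_j \leftarrow C_j - C_{j-1}$, performed from right to left) transforms this matrix into $\det(h_{\beta_i+j-i} - h_{\beta_i+j-i-1}) = \det(h_{\beta_i+j-i}[X-1])$, which is $s_\beta[X-1]$ by Jacobi--Trudi.

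With the intermediate identity in hand, the rest is a short generating-series calculation. Multiplying by $s_\beta[A]$ and summing over $\beta$, then applying Cauchy \eqref{Cauchy} and the factorization \eqref{factorization}:
\[
\sum_\beta s_\beta[A]\, \ss[X]\, s_\beta[X-1] \;=\; \ss[X]\, \ss[A(X-1)] \;=\; \ss[X]\, \ss[AX]\, \ss[-A] \;=\; \ss[-A]\, \ss[X(A+1)].
\]
Extracting the degree-$n$ part in $X$ (which commutes with the $X$-free factor $\ss[-A]$) and then Kronecker-multiplying by any $f[X]$ homogeneous of degree $n$, via \eqref{product} applied to the virtual alphabet $A+1$, I obtain $\KB{\ss[AX]}(f) = \ss[-A] \cdot f[X(A+1)]$. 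Linearity in the homogeneous components extends this to arbitrary $f$.

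The main obstacle is the determinantal identity $\sum_n s_{(n-|\beta|,\beta)}[X] = \ss[X]\, s_\beta[X-1]$: it is the only step requiring a genuine manipulation of Jacobi--Trudi matrices rather than pure plethystic bookkeeping. Everything downstream is a brief application of Cauchy combined with the already-established identity \eqref{product}.
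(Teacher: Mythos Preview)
Your proof is correct. The overall route coincides with the paper's: both arguments reduce \eqref{effectKbar} to the identity
\[
\sum_{\beta} s_{\beta}[A]\Bigl(\sum_{n\ge 0} s_{(n-|\beta|,\beta)}[X]\Bigr) \;=\; \ss[X]\,\ss[A(X-1)] \;=\; \ss[-A]\,\ss[X(A+1)],
\]
and then finish by applying \eqref{product} with the virtual alphabet $A+1$. The one substantive difference is how the inner identity $\sum_{n} s_{(n-|\beta|,\beta)} = \ss[X]\,s_{\beta}[X-1]$ is obtained. The paper packages it as the vertex-operator formula $\Gamma_1 f = \ss[X]\,f[X-1]$ and cites \cite{ScharfThibonWybourne,RosasGamma} for it; you instead prove it directly by Laplace-expanding the Jacobi--Trudi determinant along its first row, summing the $h_{n-|\beta|+j-1}$ to $\ss$, and then performing the column sweep $C_j \leftarrow C_j - C_{j-1}$ to recognize $s_{\beta}[X-1]$. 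Your argument is therefore self-contained where the paper defers to the literature; the paper's version is shorter but not more informative. Both are valid, and your determinantal verification is exactly the content of the cited vertex-operator identity restricted to the Schur basis.
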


\begin{proof}
Equation \eqref{effectU} is straightforward. Equation \eqref{effectD} is \eqref{sum}. Equation \eqref{effectK} is \eqref{product}. Let us prove \eqref{effectKbar}. 
For any symmetric functions $f$ and $g$,
$
\KB{f}(g)=\Gamma_1 f \ast g
$,
where $\Gamma_1$ is the vertex operator:
\begin{equation}\label{Gamma-VO}
\Gamma_1=\left( \sum_{i=0}^{\infty} U_{(i)}\right) \left( \sum_{j=0}^{\infty} (-1)^j \D{(1^j)}\right)
\end{equation}

 We will make use of the following identity (see \cite[\S3]{ScharfThibonWybourne}. For a combinatorial approach to this identity see \cite{RosasGamma} ):
\begin{equation}\label{vertex}
\Gamma_1 f = \ss[X] f[X-1].
\end{equation}
Therefore, we have 
{\allowdisplaybreaks
\begin{align*}
\KB{\ss[AX]}(f)
&=\left(\Gamma_1 \; \ss[AX]\right) \ast f[X] && \\
&=\left(\ss[X] \; \ss[A(X-1)]\right) \ast f[X] && \\
&=\ss[X+A(X-1)] \ast f[X] && \text{by \eqref{factorization},}\\
&=\ss[X(A+1)-A] \ast f[X] && \\
&=\ss[-A] \; \left(\ss[X(A+1)] \ast f[X] \right)&& \text{by \eqref{factorization} again,}\\
&=\ss[-A] \cdot f[X(A+1)] && \text{by \eqref{product}.}\\
\end{align*}
}
\end{proof}

\subsection{Operators $U$ and $D$}

Let us prove the first two identities in Theorem \ref{thm:main}, which are \eqref{eqDbetaUalpha},
\begin{align*}
\D{\beta} \; \U{\alpha} &=
\sum_{\lambda} \U{{\alpha/\lambda}} \D{{\beta/\lambda}}\,, %\tag{\ref{eqDbetaUalpha}}&& 
\end{align*}
and \eqref{eqUalphaDbeta},
\begin{align*}
\U{\alpha} \; \D{\beta}&=
\sum_{\lambda} (-1)^{|\lambda|} \D{{\beta/\lambda'}} \U{{\alpha/\lambda}}\,. %\tag{\ref{eqUalphaDbeta}}
\end{align*}

To this aim, we first establish commutation relations between the generating series of the operators $U$ and $D$.
\begin{lemma}
Let $A$ and $B$ be two alphabets. We have
{\allowdisplaybreaks
\begin{align}
\DD{B}\UU{A}&=\ss[AB] \UU{A}\DD{B},  \label{eqDU}\\
\UU{A}\DD{B}&=\ss[-AB] \DD{B}\UU{A}.\label{eqUD}
\end{align}
}
\end{lemma}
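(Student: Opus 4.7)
The plan is to verify both identities by applying each side to an arbitrary symmetric function $f[X]$ and comparing, using the explicit formulas for $\UU{A}$ and $\DD{B}$ supplied by Lemma \ref{lemma:effects}. The crucial conceptual point is that $\ss[AB]$ is a symmetric function in the auxiliary alphabets $A$ and $B$ only and does not involve $X$; therefore it commutes with every operator on $\Sym(X)$, and treating it as a scalar coefficient is legitimate.

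For \eqref{eqDU}, I would compute
\[
\DD{B}\UU{A}(f[X]) = \DD{B}\bigl(\ss[AX]\,f[X]\bigr) = \ss[A(X+B)]\,f[X+B],
\]
using \eqref{effectU} followed by \eqref{effectD}, where the substitution $X \mapsto X+B$ is applied uniformly (including inside $\ss[AX]$). Then I would split the virtual alphabet by \eqref{factorization}:
\[
\ss[A(X+B)] = \ss[AX+AB] = \ss[AX]\,\ss[AB],
\]
obtaining $\ss[AB]\,\ss[AX]\,f[X+B]$. A direct evaluation of the right side gives
\[
\ss[AB]\,\UU{A}\DD{B}(f[X]) = \ss[AB]\,\UU{A}(f[X+B]) = \ss[AB]\,\ss[AX]\,f[X+B],
\]
matching the previous expression and proving \eqref{eqDU}.

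For \eqref{eqUD}, the fastest route is to multiply both sides of \eqref{eqDU} by $\ss[-AB]$ and use the identity $\ss[-AB]\,\ss[AB] = \ss[-AB+AB] = \ss[0] = 1$, which follows again from \eqref{factorization} applied to $p_k[0]=0$. Since $\ss[-AB]$ does not involve $X$, it commutes through $\UU{A}$ and $\DD{B}$, yielding
\[
\UU{A}\DD{B} = \ss[-AB]\,\DD{B}\UU{A}.
\]
Alternatively, one can repeat the calculation directly: $\UU{A}\DD{B}(f[X]) = \ss[AX]\,f[X+B]$, while $\ss[-AB]\,\DD{B}\UU{A}(f[X]) = \ss[-AB]\,\ss[AB]\,\ss[AX]\,f[X+B] = \ss[AX]\,f[X+B]$.

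There is essentially no obstacle; the only subtlety is the bookkeeping with virtual alphabets, in particular making sure the substitution $X \mapsto X+B$ that defines $\DD{B}$ is applied to the full expression $\ss[AX]\,f[X]$ (not just to $f[X]$), and recognizing that the resulting factor $\ss[AB]$ is a genuine scalar with respect to the action on $\Sym(X)$.
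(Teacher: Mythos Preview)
Your proof is correct and follows essentially the same approach as the paper: apply both sides to an arbitrary $f[X]$, use \eqref{effectU} and \eqref{effectD} together with the factorization \eqref{factorization} to produce the scalar $\ss[AB]$, and then derive \eqref{eqUD} from \eqref{eqDU} by inverting $\ss[AB]$. Your added remark that $\ss[AB]$ involves only the auxiliary alphabets and hence commutes with operators on $\Sym(X)$ is a useful clarification.
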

\begin{proof}
The second of these identities is obtained straightforwardly from the first after noting that $\ss[-AB]$ is the inverse of $\ss[AB]$ (see \eqref{factorization}). 

Let us prove \eqref{eqDU} using  Lemmas~\ref{properties:ss} and~\ref{lemma:effects}. We have, for any symmetric function $f[X]$,
{\allowdisplaybreaks
\begin{align*}
\DD{B} \UU{A} (f[X]) &= \DD{B} (\ss[AX] f[X]) && \\
&= \ss[A(X+B)] f[X+B] && \text{by \eqref{effectD},}\\
&= \ss[AB]\ss[AX]f[X+B] && \text{by \eqref{factorization},}\\
&= \ss[AB]\ss[AX]\D{\ss[BX]}(f[X]) && \text{by \eqref{effectD},}\\
&= \ss[AB]\UU{A}\DD{B}(f[X]).&&
\end{align*}
}
\end{proof}

\begin{proof}[Proof of \eqref{eqDbetaUalpha}]
We will use that, since $\U{}$ and $\D{}: \Sym \rightarrow \operatorname{End}(\Sym)$ are morphisms of algebras, we can write, for any symmetric function $f$, that $\U{f}=f[U]$ and $\D{f}=f[D]$. In particular, for the generating series, we have $\U{\ss[AX]}=\ss[AU]$ and $\D{\ss[BX]}=\ss[BD]$.

In \eqref{eqDU}, the operator $\D{\beta}\U{\alpha}$ is the coefficient of $s_{\alpha}[A]s_{\beta}[B]$ in the expansion in the Schur basis of $\ss[AB] \UU{A}\DD{B}$, which is extracted by performing the scalar product with  $s_{\alpha}[A]s_{\beta}[B]$.
Thus
{\allowdisplaybreaks
\begin{align*}
\D{\beta}\U{\alpha}
&=\scalar{\ss[AB] \UU{A} \DD{B}}{s_{\alpha}[A] s_{\beta}[B]}_{A,B} && \\
&=\sum_{\lambda} \scalar{s_{\lambda}[A] s_{\lambda}[B] \UU{A} \DD{B}}{s_{\alpha}[A] s_{\beta}[B]}_{A,B} && \text{by \eqref{Cauchy},}\\
&=\sum_{\lambda} \scalar{s_{\lambda}[A] \UU{A}}{s_{\alpha}[A] }_A \scalar{s_{\lambda}[B] \DD{B}}{s_{\beta}[B]}_{B} && \\
&=\sum_{\lambda} \scalar{\UU{A}}{s_{\alpha/\lambda}[A] }_A \scalar{\DD{B}}{s_{\beta/\lambda}[B]}_{B} && \\
&=\sum_{\lambda} \scalar{\ss[AU]}{s_{\alpha/\lambda}[A] }_A \scalar{\ss[BD]}{s_{\beta/\lambda}[B]}_{B} && \\
&=\sum_{\lambda} s_{\alpha/\lambda}[U]  s_{\beta/\lambda}[D] && \text{ by \eqref{reproducing},}\\
&=\sum_{\lambda} \U{\alpha/\lambda}  \D{\beta/\lambda}. && %\text{ this is \eqref{eqDbetaUalpha}.}
\end{align*}
}
\end{proof}
Identity \eqref{eqUalphaDbeta} is derived from \eqref{eqUD} analogously.

%%%%%%%%%%%%%%%%%%%%%%%%%%%%%%%%%%%%%%%%%%%%%%%%%%%%%%%%%%%%
%%%%%%%%%%%%%%%%%%%%%%%%%%%%%%%%%%%%%%%%%%%%%%%%%%%%%%%%%%%%
%%%%%%%%%%%%%%%%%%%%%%%%%%%%%%%%%%%%%%%%%%%%%%%%%%%%%%%%%%%%
                                  %%%%%%%%%%%%% U and K &&&&&&&&&&&&&&
%%%%%%%%%%%%%%%%%%%%%%%%%%%%%%%%%%%%%%%%%%%%%%%%%%%%%%%%%%%%
%%%%%%%%%%%%%%%%%%%%%%%%%%%%%%%%%%%%%%%%%%%%%%%%%%%%%%%%%%%%

\subsection{Operators $U$ and $K$}

%%%%%%%%%%%%%%%%%%%%%%%%%%%%%%%%%%%%%%%%%%%%%%%%%%%%%%%%%%%%

We now turn to the proof of \eqref{KU2}:
%\begin{equation}
\[
\K{\beta} \U{\alpha} =\sum_{\lambda} \U{s_{\beta/\lambda} \ast s_{\alpha}} \K{\lambda} %\tag{\ref{KU2}}
%\end{equation}
\]
(The identity \eqref{DK2} follows straightforwardly from \eqref{KU2} by taking adjoints).

Again we consider first commutation relations for the generating series of the operators $K$ and $U$.
\begin{lemma}
Let $A$ and $B$ be two alphabets.  We have
\begin{equation}\label{KKUU}
\KK{B} \UU{A}= \UU{AB} \KK{B}. 
\end{equation}
\end{lemma}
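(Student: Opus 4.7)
The plan is to verify this identity directly on generating series, by applying both sides to an arbitrary symmetric function $f[X]$ and appealing to the explicit formulas \eqref{effectU} and \eqref{effectK} of Lemma \ref{lemma:effects}. Unlike the $U$--$D$ case, where the scaling factor $\ss[AB]$ intervenes, I expect no such factor to appear here, so the relation should be clean.

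For the left-hand side I would first apply $\UU{A}$ via \eqref{effectU}, giving
\[
\UU{A}(f[X]) = \ss[AX] \cdot f[X].
\]
Regarding the right-hand side of this equation as a single symmetric function $g[X]$ in $X$, with $A$ playing the role of a parameter, I would then apply \eqref{effectK}, which sends $g[X]$ to $g[BX]$. Since plethystic substitution is an algebra morphism, it distributes over the product $\ss[AX] \cdot f[X]$, and the result is $\ss[A\cdot BX] \cdot f[BX] = \ss[ABX] \cdot f[BX]$.

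For the right-hand side I would apply the two operators in the opposite order: by \eqref{effectK}, $\KK{B}(f[X]) = f[BX]$, and by \eqref{effectU}, $\UU{AB}$ is multiplication by $\ss[(AB)X] = \ss[ABX]$, so that $\UU{AB}(f[BX]) = \ss[ABX]\cdot f[BX]$. The two sides agree, which establishes \eqref{KKUU}.

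The only point requiring any care is the justification that $\KK{B}$ acts on the product $\ss[AX]\cdot f[X]$ as the substitution $X \mapsto BX$ applied to each factor. This is not a general feature of the Kronecker product (which is not multiplicative), but it follows immediately once one views $\ss[AX]\cdot f[X]$ as a single symmetric function in $X$ and applies \eqref{effectK} to it, together with the fact that $(gh)[BX] = g[BX]\cdot h[BX]$. Beyond this bookkeeping, there is no real obstacle.
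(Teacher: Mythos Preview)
Your proof is correct and follows essentially the same route as the paper: apply both sides to an arbitrary $f[X]$, use \eqref{effectU} and \eqref{effectK} to reduce each side to $\ss[ABX]\cdot f[BX]$, and observe they coincide. Your added remark justifying why $\KK{B}$ acts on the product $\ss[AX]\cdot f[X]$ via the substitution $X\mapsto BX$ is a welcome clarification of the one step the paper leaves implicit.
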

% C was T
% B was Z
\begin{proof}
We have, for any symmetric function $f$,
{\allowdisplaybreaks
\begin{align*}
\KK{B} \UU{A} (f[X]) &= \KK{B}(\ss[AX] f[X])&&\\
&= \ss[ABX] f[BX] && \text{by \eqref{effectK},}\\
&= \UU{AB}(f[BX]) && \\
&= \UU{AB}\KK{B}(f[X])&& \text{by \eqref{effectK}.}
\end{align*}
}
\end{proof}

\begin{proof}[Proof of \eqref{KU2}]
We now get $\K{\beta}\U{\alpha}$ from \eqref{KKUU} by extracting the coefficient of $s_{\beta}[B] s_{\alpha}[A]$ in its expansion in terms of Schur functions:
{\allowdisplaybreaks
\begin{align*}
\K{\beta} \U{\alpha}  
&= \scalar{\UU{AB} \KK{B}}{s_{\beta}[B] s_{\alpha}[A]}_{A,B} && \\
&= \sum_{\lambda} \scalar{\UU{AB} s_{\lambda}[B]}{s_{\beta}[B] s_{\alpha}[A]}_{A,B} \K{\lambda}&& \\
&= \sum_{\lambda} \scalar{\UU{AB}}{s_{\beta/\lambda}[B] s_{\alpha}[A]}_{A,B} \K{\lambda}&& \\
&= \sum_{\lambda} \scalar{ \scalar{\UU{AB}}{s_{\alpha}[A]}_A}{s_{\beta/\lambda}[B]}_{B} \K{\lambda}&& \\
&= \sum_{\lambda} \scalar{ \scalar{\ss[ABU]}{s_{\alpha}[A]}_A}{s_{\beta/\lambda}[B]}_{B} \K{\lambda}&& \\
&= \sum_{\lambda} \scalar{ s_{\alpha}[BU]}{s_{\beta/\lambda}[B]}_{B} \K{\lambda}&& \text{by \eqref{reproducing},}\\
&= \sum_{\lambda} (s_{\alpha}\ast s_{\beta/\lambda})[U] \K{\lambda}&& \text{by \eqref{scalar},}\\
&= \sum_{\lambda} \U{s_{\alpha}\ast s_{\beta/\lambda}} \K{\lambda}.&& %\text{ this is \eqref{KU2}.}
\end{align*}
}
\end{proof}

%%%%%%%%%%%%
%%%%%%%%%%%%

\subsection{Operators $U$ and $\KB{}$}

We now proceed to proving \eqref{KKKU2}:
%\begin{equation}
\[
\KB{\beta} \U{\alpha}=
\sum_{\tau,\nu}   \U{(s_{\beta/\nu} \ast s_{\tau}) s_{\alpha/\tau}} \KB{\nu}\,. %\tag{\ref{KKKU2}}
%\end{equation}
\]
(The identity \eqref{DKKK2} is deduced by taking adjoints).  Again, we first consider commutation relations for the generating series of the families of operators involved.

\begin{lemma}
Let $A$ and $B$ be two alphabets. We have
\begin{equation}\label{GKbarU}
\KB{\ss[BX]} \UU{A} = \UU{A(B+1)} \KB{\ss[BX]}.
\end{equation}
\end{lemma}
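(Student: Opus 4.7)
The plan is to mirror the proofs of the commutation relations \eqref{eqDU} and \eqref{KKUU}: apply both sides of \eqref{GKbarU} to an arbitrary symmetric function $f[X]$, and use the explicit formulas from Lemma~\ref{lemma:effects} together with the rules for plethystic substitution to reduce each side to a single closed-form expression that I can then compare.

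For the left-hand side, I would first rewrite $\UU{A}(f[X]) = \ss[AX]\, f[X]$ using \eqref{effectU}. Applying \eqref{effectKbar} to the product $\ss[AX]\, f[X]$ substitutes $X \mapsto X(B+1)$ everywhere and multiplies by $\ss[-B]$, yielding
$$\KB{\ss[BX]}\, \UU{A}(f[X]) = \ss[-B] \cdot \ss[AX(B+1)] \cdot f[X(B+1)].$$
For the right-hand side, I would first apply \eqref{effectKbar} to obtain $\KB{\ss[BX]}(f[X]) = \ss[-B]\, f[X(B+1)]$, and then multiply by $\ss[A(B+1)X]$ via \eqref{effectU}, yielding
$$\UU{A(B+1)}\, \KB{\ss[BX]}(f[X]) = \ss[A(B+1)X] \cdot \ss[-B] \cdot f[X(B+1)].$$
Since $A(B+1)X$ and $AX(B+1)$ denote the same virtual alphabet (the power-sum product of alphabets being commutative), the two expressions agree, which is exactly \eqref{GKbarU}.

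The only subtlety, and the step where one must be careful, is the plethystic substitution $X \mapsto X(B+1)$ applied to the factor $\ss[AX]$: because $f \mapsto f[X(B+1)]$ is an algebra morphism sending $p_k[X] \mapsto p_k[X]\cdot p_k[B+1]$, one indeed obtains $\ss[AX] \mapsto \ss[AX(B+1)]$, and no spurious factor of $\ss[-B]$ is lost or duplicated. Once this identification is made, the computation is essentially automatic, and I do not anticipate any obstacle beyond careful bookkeeping of the virtual alphabets. With \eqref{GKbarU} in hand, identity \eqref{KKKU2} will follow by the same scalar-product extraction strategy used in the previous subsections, after factoring $\ss[A(B+1)X] = \ss[AX]\,\ss[ABX]$ via \eqref{factorization} and expanding with Cauchy's identity \eqref{Cauchy}.
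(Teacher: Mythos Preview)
Your proposal is correct and follows essentially the same route as the paper: both arguments evaluate each side on an arbitrary $f[X]$ using \eqref{effectU} and \eqref{effectKbar}, reducing the identity to the commutativity of the alphabet product $AX(B+1)=A(B+1)X$. The only cosmetic difference is that the paper computes the left-hand side and rewrites it step by step into the right-hand side, whereas you compute the two sides separately and compare.
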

\begin{proof}
We have, for any symmetric function $f$, 
{\allowdisplaybreaks
\begin{align*}
\KB{\ss[BX]}\UU{A}(f)
&=\KB{\ss[BX]}(\ss[AX] f[X])&&\\ 
&=\ss[-B]\ss[AX(B+1)] f[X(B+1)]&& \text{by \eqref{effectKbar}, }\\
&=\ss[AX(B+1)] \ss[-B] f[X(B+1)]&&\\
&=\ss[AX(B+1)] \KB{\ss[BX]}(f)&& \text{by \eqref{effectKbar} again, }\\
&=\UU{A(B+1)} \KB{\ss[BX]}(f).&&
\end{align*}
}
\end{proof}

\begin{proof}[Proof of \eqref{KKKU2}]
From \eqref{GKbarU} we extract the term $\KB{\beta}\U{\alpha}$ by taking scalar product with $s_{\alpha}[A]\;s_{\beta}[B]$. This yields
\[
\KB{\beta}\U{\alpha} 
=\scalar{\UU{A(B+1)} \KB{\ss[BX]}}{s_{\alpha}[A] s_{\beta}[B]}_{A,B}.
\]
Expanding in the scalar product the generating function $\KB{\ss[BX]}$, we get
\[
\KB{\beta}\U{\alpha}
=\sum_{\nu}\scalar{\UU{A(B+1)} s_{\nu}[B]}{s_{\alpha}[A] s_{\beta}[B]}_{A,B} \KB{\nu}.
\]
This simplifies as follows:
{\allowdisplaybreaks
\begin{align*}
\KB{\beta}\U{\alpha}
&=\sum_{\nu}\scalar{\scalar{ \UU{A(B+1)}}{s_{\alpha[A]}}_A s_{\nu}[B]}{s_{\beta}[B]}_{B} \KB{\nu}&&  \\
&=\sum_{\nu}\scalar{\scalar{ \ss[A(B+1)U]}{s_{\alpha[A]}}_A s_{\nu}[B]}{s_{\beta}[B]}_{B} \KB{\nu}&&  \\
&=\sum_{\nu}\scalar{s_{\alpha[(B+1)U]} s_{\nu}[B]}{s_{\beta}[B]}_{B} \KB{\nu}&& \text{by \eqref{reproducing}, } \\
&=\sum_{\nu}\scalar{s_{\alpha[BU+U]} s_{\nu}[B]}{s_{\beta}[B]}_{B} \KB{\nu}&&  \\
&=\sum_{\nu}\scalar{\sum_{\tau} s_{\tau}[BU] s_{\alpha/\tau}[U]}{s_{\beta/\nu}[B]}_{B} \KB{\nu}&& \\
&=\sum_{\nu,\tau}\scalar{s_{\tau}[BU]}{s_{\beta/\nu}[B]}_{B} s_{\alpha/\tau}[U]\KB{\nu}&& \\
&=\sum_{\nu,\tau} (s_{\tau}\ast s_{\beta/\nu})[U] s_{\alpha/\tau}[U]\KB{\nu}&& \text{by \eqref{scalar},}\\
&=\sum_{\nu,\tau} \U{(s_{\tau} \ast s_{\beta/\nu})s_{\alpha/\tau}}\KB{\nu}\,.&& %\text{this is \eqref{KKKU2}.}
\end{align*}
}
\end{proof}

In Section \ref{SkewKronecker} we present as an application a combinatorial rule for the Kronecker
product of any skew Schur function by $s_{(n-1,1)}$. 
Other interesting particular cases of \eqref{KKKU2} correspond to the cases 
when $\lambda=(k)$ (Kronecker product with a two-row shape) and $\lambda=(1^k)$ (Kronecker product with a hook), where we get:
\[
\KB{(k)} \U{\alpha} =
\sum_{j=0}^k \left(\sum_{\rho \vdash k-j} \U{\alpha/\rho} \U{\rho}\right) \KB{(j)},\qquad
\KB{(1^k)} \U{\alpha} =
\sum_{j=0}^k \left(\sum_{\rho \vdash k-j} \U{\alpha/\rho} \U{\rho'}\right) \KB{(1^j)}\,.
\]
Setting $n=|\alpha|$ and $m=|\gamma|$, the same identities can be stated as:
\begin{align*}
s_{(n+m-k,k)}\ast(s_{\alpha}s_{\gamma}) =
\sum_{j=0}^k \left(\sum_{\rho \vdash k-j} s_{\alpha/\rho} s_{\rho} \right)( s_{\gamma}\ast s_{(m-j,j)}),\\
s_{(n+m-k,1^k)} \ast (s_{\alpha}s_{\gamma}) =
\sum_{j=0}^k \left(\sum_{\rho \vdash k-j} s_{\alpha/\rho} s_{\rho'}\right) (s_{\gamma} \ast s_{(m-j,1^j)}).
\end{align*}
The terms of the form $\sum_{\rho \vdash q} s_{\alpha/\rho}  s_{\rho}$ and $\sum_{\rho \vdash q} s_{\alpha/\rho}  s_{\rho'}$ that appear in these identities can take the following alternative forms, as a direct consequence of Littlewood's Identity \eqref{equ:littlewood}:
\[
\sum_{\rho \vdash q} s_{\alpha/\rho} s_{\rho} =s_{\alpha} \ast h_{(n-q,q)}, \quad \text{ and likewise }
\sum_{\rho \vdash q} s_{\alpha/\rho} s_{\rho'}  =s_{\alpha} \ast (h_{n-q}e_q).
\]

%%%%%%%%%%%%%%%%%%%%%%%%%%%%%%%%%%%%%%%%%%%%%%%%%%%%%%%%%%%%%%%%%%%%%%%%%%%%
%%%%%%%%%%%%%%%%%%%%%%%%%%%%%%%%%%%%%%%%%%%%%%%%%%%%%%%%%%%%%%%%%%%%%%%%%%%%
%%%%%%%%%%%%%%%%%%%%%%%%%%%%%%%%%%%%%%%%%%%%%%%%%%%%%%%%%%%%%%%%%%%%%%%%%%%%

\section{Uniqueness of expansions}\label{uniqueness}

The identities in Theorem \ref{thm:main} and Corollary \ref{thm:main-cor} express some operators as linear combinations of operators $\U{\mu} \D{\nu}$, $\D{\nu} \U{\mu}$, $\U{\mu} \K{\nu}$, etc.  Are such expressions unique?

To answer this question, we associate to any pair $P$, $Q$ of linear maps from $\Sym$ to $\operatorname{End}(\Sym)$ a generating series depending on four independent alphabets $X$, $A$, $B$, $T$. This generating series is
\[
\sum_{\alpha, \beta,\lambda} P_{\alpha}(Q_{\beta}(s_{\lambda}[X])) s_{\alpha}[A] s_{\beta}[B] s_{\lambda}[T].
\]
It can be written more concisely as
\[
P_{\ss[AX]} Q_{\ss[BX]} (\ss[XT]).
\]
We also associate to the pair $P$, $Q$ the linear map $\Phi_{P,Q}$ from $\Sym(A) \otimes_Q \Sym(B)$ to the set of formal series of the form $\sum_{\alpha, \beta} a_{\alpha,\beta} s_{\alpha}[X] s_{\beta}[T]$, defined on simple tensors by:
\[
\Phi_{P,Q}( f[A] g[B]) = \scalar{P_{\ss[AX]} Q_{\ss[BX]} (\ss[XT])}{ f[A] g[B]}_{A,B}
\]

Let us say that \emph{finite expansions with respect to $(P,Q)$ are unique} if, for any $M \in \operatorname{End}(\Sym)$, there is at most one expansion
\[
 M = \sum_{\alpha, \beta} a_{\alpha, \beta} P_{\alpha} Q_{\beta}.
\]
That is, finite expansions with respect to $(P,Q)$ are unique when the operators $P_{\alpha} Q_{\beta}$, for $\alpha$ and $\beta$ partitions, are linearly independent.

\begin{proposition}\label{finite expansions}
Finite expansions with respect to the pairs $(U,D)$, $(D,U)$, $(U,K)$, $(K,D)$, $(U,\KB{})$ and $(\KB{},D)$ are unique.
\end{proposition}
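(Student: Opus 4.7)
The plan is to exploit the generating-series map $\Phi_{P,Q}$ just defined. The operators $\{P_\alpha Q_\beta\}$ are linearly independent if and only if $\Phi_{P,Q}$ is injective: indeed, $M = \sum a_{\alpha,\beta} P_\alpha Q_\beta$ vanishes as an operator iff $\sum_\lambda M(s_\lambda[X])\,s_\lambda[T] = 0$, which is precisely $\Phi_{P,Q}\bigl(\sum a_{\alpha,\beta} s_\alpha[A] s_\beta[B]\bigr) = 0$. So for each pair I will first compute $\Phi_{P,Q}(f[A]g[B])$ in closed form using Lemma \ref{lemma:effects} and the identities of Lemma \ref{properties:ss}, and then verify injectivity. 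Direct iteration (along the same lines as in Section~\ref{sec:proofs}) gives
\begin{align*}
\Phi_{U,D}(f[A]g[B]) &= f[X]\, g[T]\, \ss[XT], \\
\Phi_{U,K}(f[A]g[B]) &= f[X]\, g[XT], \\
\Phi_{U,\KB{}}(f[A]g[B]) &= f[X]\, g[XT-1]\, \ss[XT].
\end{align*}

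The injectivity of $\Phi_{U,D}$ is immediate, since $\ss[XT]$ is invertible as a formal series and $\{s_\alpha[X]s_\beta[T]\}$ is a basis of $\Sym(X) \otimes \Sym(T)$. For $\Phi_{U,K}$, expanding $s_\beta[XT] = \sum_\lambda (s_\beta \ast s_\lambda)[X]\, s_\lambda[T]$ and extracting each $s_\lambda[T]$-coefficient reduces the vanishing of $\sum a_{\alpha,\beta}\, s_\alpha[X]\, s_\beta[XT]$ to the system
\[
\sum_{|\beta|=n} F_\beta \cdot (s_\beta \ast s_\lambda) = 0 \qquad (\forall\, \lambda \vdash n),
\]
where $F_\beta = \sum_\alpha a_{\alpha,\beta} s_\alpha \in \Sym$. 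Expanding $s_\beta \ast s_\lambda = \sum_\rho z_\rho^{-1} \chi^\beta_\rho \chi^\lambda_\rho\, p_\rho$ in the power-sum basis, and applying the invertibility of the character table of $S_n$ twice (first to decouple the $\rho$-sum against all $\lambda$, using that $\Sym$ is an integral domain to strip away the factor $p_\rho$, then once more to decouple the resulting $\beta$-sum against all $\rho$), forces $F_\beta = 0$ and hence $a_{\alpha,\beta} = 0$. For $\Phi_{U,\KB{}}$, dividing through by the invertible factor $\ss[XT]$, injectivity reduces to showing that the algebra morphism $\Sym(A) \otimes \Sym(B) \to \Sym(X) \otimes \Sym(T)$ sending $p_k[A] \mapsto p_k[X]$ and $p_k[B] \mapsto p_k[X]p_k[T]-1$ is injective. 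This follows from the algebraic independence of $\{p_k[X],\, p_k[X]p_k[T]-1\}_k$, verified by observing that the substitution $p_k[T] \leftarrow (v_k+1)/p_k[X]$ inverts this system over the Zariski-open dense set $\{p_k[X] \neq 0\}_k$, so no polynomial identity can hold.

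The remaining three cases follow by symmetry. Since $\K{}$ and $\KB{}$ are self-adjoint and $U_\alpha^* = D_\alpha$, one has $(\K{\alpha} D_\beta)^* = U_\beta \K{\alpha}$ and $(\KB{\alpha} D_\beta)^* = U_\beta \KB{\alpha}$; a nontrivial dependence in the $(K,D)$- or $(\KB{},D)$-family would, after taking adjoints and relabelling $\alpha \leftrightarrow \beta$, contradict the $(U,K)$- or $(U,\KB{})$-case. The adjoint trick is self-dual for $(D,U)$, so for this case I instead invoke identity \eqref{eqDbetaUalpha} to rewrite $D_\alpha U_\beta = \sum_\lambda U_{\beta/\lambda} D_{\alpha/\lambda}$ and re-expand into the $(U,D)$-basis; $(U,D)$-uniqueness then yields $\sum_{\alpha,\beta} a_{\alpha,\beta}\, \scalar{s_{\beta/\mu}}{s_{\alpha/\nu}} = 0$ for all $\mu,\nu$, and choosing $(\mu,\nu) = (\beta_0, \alpha_0)$ with $(\alpha_0,\beta_0)$ maximizing $|\alpha|+|\beta|$ among indices with $a_{\alpha,\beta} \neq 0$ isolates the term $a_{\alpha_0,\beta_0}$ and forces it to be zero.

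I expect the main obstacle to be the $(U,K)$ case: the double invocation of character-table invertibility, and the interaction between the algebra multiplication in $\Sym$ (carrying the coefficients $F_\beta$) and the Kronecker product (acting on $S_n$-characters), must be separated carefully. By contrast, the $(U,\KB{})$ case reduces cleanly to an algebraic-independence statement on power sums, and the reductions for the remaining three cases are essentially bookkeeping.
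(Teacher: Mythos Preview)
Your proof is correct and follows the paper's overall strategy: reduce to injectivity of $\Phi_{P,Q}$, compute the same closed forms for $\Phi_{U,D}$, $\Phi_{U,K}$, $\Phi_{U,\KB{}}$, and handle $(K,D)$ and $(\KB{},D)$ by adjoints. The differences are in how you verify injectivity in the remaining cases. For $(U,K)$ the paper simply post-composes with the virtual-alphabet specialization $T\mapsto T/X$, turning $f[X]g[XT]$ into $f[X]g[T]$; your character-table argument reaches the same conclusion but with more machinery. For $(U,\KB{})$ the paper analogously specializes $T\mapsto (T+1)/X$, which is exactly the birational inverse you invoke, so the two arguments coincide in substance. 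For $(D,U)$ the paper instead computes $\Phi_{D,U}$ directly (it factors through multiplication by $\ss[XT]$ and the invertible operator $\ss[X^\perp T^\perp]$), whereas your route via the normal-ordering relation \eqref{eqDbetaUalpha} and a top-degree extraction is a genuinely different, more combinatorial, argument. Both are clean; the paper's specializations are slicker for $(U,K)$, while your $(D,U)$ argument has the virtue of showing that uniqueness for $(D,U)$ is a formal consequence of uniqueness for $(U,D)$ together with the triangularity of the change-of-basis.
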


\begin{example}
In contrast, observe that expansions with respect to  $(K,U)$ or to $(D,K)$ are not unique. For instance we have the relation $\K{p_2} \U{p_1} =0$, that is straightforwardly equivalent to the relation $\K{2}\U{1} = \K{1,1}\U{1}$. Taking adjoints, we have the relation $\D{1}\K{2} = \D{1} \K{1,1}$.
\end{example}

Proposition \ref{finite expansions} will be proved using the following lemma.
\begin{lemma}\label{coordinate_independent}
Let $P$ and $Q$ of linear maps from $\Sym$ to $\operatorname{End}(\Sym)$. 
\begin{enumerate}
\item Finite expansions with respect to $(P,Q)$ are unique if and only if $\Phi_{P,Q}$ is injective.
\item Let $M \in \operatorname{End}(\Sym)$. 
\begin{itemize}
\item The operator $M$ is in the linear span of the operators $P_f Q_g$, for $f$, $g \in \Sym$, if and only if 
$M(\ss[XT])$ lies in the image of $\Phi_{P,Q}$. 
\item If $M(\ss[XT]) = \Phi_{P,Q}(F)$ then $M$ is the image of $F$ under the linear map defined on the Schur basis by  $s_{\alpha}[A] s_{\beta}[B] \mapsto P_{\alpha} Q_{\beta}$.
\end{itemize}
\end{enumerate}
\end{lemma}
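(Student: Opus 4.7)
The plan hinges on the observation that the Cauchy kernel $\ss[XT]$ is a universal test element. For any $M \in \operatorname{End}(\Sym)$, applying $M$ in the $X$-variable to the Cauchy Identity \eqref{Cauchy} will give
$$M(\ss[XT]) = \sum_{\lambda} M(s_\lambda[X])\, s_\lambda[T].$$
Since the $s_\lambda[T]$ are linearly independent, the assignment $\iota \colon M \mapsto M(\ss[XT])$ defines an \emph{injective} linear map from $\operatorname{End}(\Sym)$ into the space of formal series of the shape $\sum_{\alpha,\lambda} c_{\alpha,\lambda}\,s_\alpha[X]\,s_\lambda[T]$. Thus the entire operator $M$ is encoded by $M(\ss[XT])$.

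Next I will introduce the linear map $\Psi \colon \Sym(A) \otimes \Sym(B) \to \operatorname{End}(\Sym)$ defined on the Schur basis by $s_\alpha[A]\,s_\beta[B] \mapsto P_\alpha Q_\beta$. By the very definition of uniqueness, finite expansions with respect to $(P,Q)$ are unique precisely when $\Psi$ is injective, and the set of operators expressible as a linear combination of the $P_\alpha Q_\beta$ is exactly the image of $\Psi$. The key identity to verify is the factorization
$$\Phi_{P,Q} = \iota \circ \Psi.$$
Expanding the Schur generating series will give
$$P_{\ss[AX]}\,Q_{\ss[BX]}\,(\ss[XT]) = \sum_{\mu,\nu} s_\mu[A]\,s_\nu[B]\,(P_\mu Q_\nu)(\ss[XT]),$$
and taking the scalar product in the $(A,B)$-variables against $s_\alpha[A]\,s_\beta[B]$ selects the single term $(P_\alpha Q_\beta)(\ss[XT]) = \iota(\Psi(s_\alpha[A]\,s_\beta[B]))$. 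By linearity this yields $\Phi_{P,Q} = \iota \circ \Psi$.

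With the factorization in hand, both parts of the lemma follow formally. For (1), the injectivity of $\iota$ makes $\Phi_{P,Q}$ injective exactly when $\Psi$ is, which is the claim. For (2), the image of $\Phi_{P,Q}$ is the $\iota$-image of the image of $\Psi$; hence $M$ lies in the span of the $P_\alpha Q_\beta$ iff $\iota(M) = M(\ss[XT])$ lies in the image of $\Phi_{P,Q}$, and if $M(\ss[XT]) = \Phi_{P,Q}(F) = \iota(\Psi(F))$ then injectivity of $\iota$ forces $M = \Psi(F)$, which is the second bullet. There is no real obstacle: the only step demanding any care is the coefficient extraction establishing $\Phi_{P,Q} = \iota \circ \Psi$, which is a purely formal manipulation built on the Cauchy Identity.
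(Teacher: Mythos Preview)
Your proof is correct and is essentially the same as the paper's. The paper carries out the identical chain of equivalences $M = \sum a_{\alpha,\beta} P_\alpha Q_\beta \Leftrightarrow M(\ss[XT]) = \Phi_{P,Q}(\sum a_{\alpha,\beta} s_\alpha[A] s_\beta[B])$ directly, whereas you have repackaged it by naming the maps $\iota$ and $\Psi$ and stating the factorization $\Phi_{P,Q} = \iota \circ \Psi$; the underlying computation (via the Cauchy kernel and coefficient extraction) is the same.
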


\begin{proof}
We start with a computation: let $M \in \operatorname{End}(\Sym)$. We have:
\begin{align*}
M = \sum_{\alpha, \beta} a_{\alpha, \beta} P_{\alpha} Q_{\beta}  
&\Leftrightarrow \text{ for any partition $\lambda$, } M(s_{\lambda}[X])  = \sum_{\alpha, \beta} a_{\alpha, \beta} P_{\alpha} Q_{\beta}  (s_{\lambda}[X]) \\
&\Leftrightarrow  \sum_{\lambda} M(s_{\lambda}[X]) s_{\lambda}[T] = \sum_{\alpha, \beta, \lambda} a_{\alpha, \beta} P_{\alpha} Q_{\beta}  (s_{\lambda}[X]) s_{\lambda}[T] \\
&\Leftrightarrow  M(\ss[XT])  = \sum_{\alpha, \beta} a_{\alpha, \beta} P_{\alpha} Q_{\beta}  (\ss[XT]) \\
&\Leftrightarrow  M(\ss[XT])  = \Phi_{P, Q}\left(\sum_{\alpha,\beta} a_{\alpha, \beta} s_{\alpha}[A] s_{\beta}[B]\right).
\end{align*}
This proves (2), since the linear span of the operators $P_f Q_g$, for $f$, $g \in \Sym$, is also the linear span of the operators $P_{\alpha} Q_{\beta}$, for $\alpha$ and $\beta$ partitions.

To obtain (1), take $M=0$ in the above equivalence.
\end{proof}

\begin{proof}[Proof of proposition \ref{finite expansions}]
For each of the pairs  $(U,D)$, $(D,U)$, $(U,K)$ and $(U,\KB{})$, we compute the corresponding generating  series by means of Lemma \ref{lemma:effects}. Next we deduce a description of the corresponding map $\Phi$ to show it is injective. The uniqueness of finite expansions follows then from (1) in Lemma \ref{coordinate_independent}.

For $(U,D)$, the detail of the calculation is as follows. The generating series is
\begin{align*}
U_{\ss[AX]} D_{\ss[BX]} (\ss[XT]) 
&=U_{\ss[AX]}  (\ss[(X+B)T], \\
&=\ss[AX] \ss[(X+B)T],  \\
&=\ss[AX+XT+BT].
\end{align*}

We deduce from this a formula for $\Phi_{U,D}$. Let $f$ and $g$ be any symmetric functions. We have
\begin{align*}
\Phi_{U,D}(f[A] g[B]) 
&= \scalar{\ss[AX+XT+BT]}{f[A] g[B]}_{A,B}&\\
&=\scalar{\ss[AX]\ss[XT]\ss[BT]}{f[A] g[B]}_{A,B}&\\
&=\ss[XT ]\scalar{\ss[AX]\ss[BT]}{f[A] g[B]}_{A,B}&\\
&=\ss[XT ]\scalar{\ss[AX]}{f[A]}_A \scalar{\ss[BT]}{g[B]}_{B}&\\
&=\ss[XT ] f[X] g[T] & \text{ by \eqref{reproducing}.}
\end{align*}
This shows that $\Phi_{U,D}$ is injective, since the series $\ss[XT]$ is invertible.

For the other three pairs, we skip the details of the calculations.

The generating series for $(D,U)$ is:
\[
D_{\ss[BX]} U_{\ss[AX]} (\ss[XT]) = \ss[(A+T)(B+X)].
\]
The map $\Phi_{D,U}$ is
\[
f[A] g[B] \mapsto \ss[XT] \ss[X^{\perp} T^{\perp}] (f[X] gT] )    
\]
The map $\Phi_{D,U}$ is injective, since the series $\ss[XT]$ is invertible, and the operator $\ss[X^{\perp} T^{\perp}]$ is invertible as well (its inverse is $\ss[- X^{\perp} T^{\perp}]$).

The generating series for $(U,K)$ is
\[
U_{\ss[AX]} K_{\ss[BX]} (\ss[XT]) = \ss[B(X+A)T].
\]
The map $\Phi_{U,K}$ is
\[
f[A] g[B] \mapsto  f[X] g[XT]
\]
To check that $\Phi_{U,K}$ is injective, post--compose it with the specialization of $T$ at $T/X$: the map obtained is $f[A] g[B] \mapsto  f[X] g[T]$, which is injective. Thus $\Phi_{U,K}$ is injective.

The generating series for $(U,\KB{})$ is
\[
U_{\ss[AX]} \KB{\ss[BX]} (\ss[XT]) = \ss[AX+B(XT-1)+XT].
\]
The map $\Phi_{U,\KB{}}$ is
\[
f[A] g[B] \mapsto     \ss[XT] f[X] g[XT-1]
\]
This map $\Phi_{U,\KB{}}$ is injective. Indeed, post--composing first with the product with the inverse of $\ss[XT]$, and next by the specialization of $T$ at $(T+1)/X$, yields the map $f[A] g[B] \mapsto  f[X] g[T]$, which is injective.

This proves the uniqueness of finite expansions with respect to $(U,D)$, $(D,U)$, $(U,K)$ and $(U,\KB{})$. The uniqueness of finite expansions with respect to $(K,D)$ and $(\KB{},D)$
 is obtained by taking adjoints.
\end{proof}

\begin{remark}
The uniqueness of finite expansions with respect to $(U, D)$ and $(D,U)$ can alternatively be proved by switching to the basis of power sums. The algebra generated by the operators $U_g$ and $D_g$ is also generated by $1$, the $U_{p_k}$ and the $D_{p_k}$ ($k>0$). The following maps define an isomorphism between this algebra and, firstly, the bosonic creation and annihilation operator algebra (this appears for instance in \cite{JimboMiwa}) and, secondly, the Weyl algebra in infinitely many generators. 
\begin{align*}
\U{p_k} &\mapsto  a_k^{\dagger}  \mapsto   \widehat{x_k}\,,\\
\D{p_k}  &\mapsto  k\; a_k            \mapsto  k\;\frac{\partial \phantom{F}}{\partial x_k}\,,
\end{align*}
where the $a_k$ are the creation operators, and the $a_k^{\dagger}$ are the annihilation operator. It is well-known that in the bosonic creation and annihilation operator algebra, the monomials in normal order
\[
(a_1^{\dagger})^{m_1}  (a_2^{\dagger})^{m_2} \cdots a_1^{n_1} a_2^{n_2} \cdots 
\]
as well as  the monomials in antinormal order
\[
 a_1^{n_1} a_2^{n_2} \cdots  (a_1^{\dagger})^{m_1}  (a_2^{\dagger})^{m_2} \cdots
\]
are linearly independent. This shows that the operators $\U{p_\lambda} \D{p_{\mu}}$ are linearly independent, and so are the operators $ \D{p_{\mu}} \U{p_\lambda}$. From this one deduces that  finite expansions with respect to $(U,D)$ and  with respect to $(D,U)$ are unique. 
\end{remark}

%%%
We finish this section with an expansion of the operators $\KB{f}$ in terms of operators $U_g$ and $D_g$.

From (2) in Lemma \ref{coordinate_independent}, we get the following result.
\begin{proposition}\label{KBf in U D}
Let $f$ be a symmetric function. The operator $\KB{f}$ lies in the subalgebra of $\operatorname{End}(\Sym)$ generated by the operators $U_g$ and $D_g$ (for $g \in \Sym$).

More precisely,
\[
\KB{f} = \sum_{\lambda} \U{f[X-1] \ast s_{\lambda}} \D{\lambda}.
\]
\end{proposition}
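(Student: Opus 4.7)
\medskip
\noindent\textbf{Proof plan for Proposition \ref{KBf in U D}.}

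The plan is to apply part (2) of Lemma \ref{coordinate_independent} with the pair $(P, Q) = (U, D)$. From the computation of $\Phi_{U,D}$ already carried out in the proof of Proposition \ref{finite expansions}, we know that $\Phi_{U,D}(f[A] g[B]) = \ss[XT]\, f[X]\, g[T]$, so by linearity $\Phi_{U,D}\bigl(\sum_{\alpha,\beta} a_{\alpha,\beta}\, s_\alpha[A] s_\beta[B]\bigr) = \ss[XT] \sum_{\alpha,\beta} a_{\alpha,\beta}\, s_\alpha[X] s_\beta[T]$. Thus it suffices to compute $\KB{f}(\ss[XT])$ and rewrite it in the form $\ss[XT]\cdot G[X,T]$ for a symmetric expression $G$, and then read off the expansion coefficients.

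First I would compute $\KB{f}(\ss[XT])$ by extracting it from the full generating series $\KB{\ss[AX]}$: since $\KB{\ss[AX]} = \sum_\lambda s_\lambda[A]\, \KB{\lambda}$, the operator $\KB{f}$ equals $\scalar{\KB{\ss[AX]}}{f[A]}_A$. Using equation \eqref{effectKbar} applied to the symmetric function $\ss[XT]$ (regarded as a symmetric function of $X$ with coefficients in $\Sym(T)$), we get $\KB{\ss[AX]}(\ss[XT]) = \ss[-A]\, \ss[X(A+1)T]$. The factorization property \eqref{factorization} rewrites this as $\ss[XT]\cdot \ss[A(XT-1)]$, so by the reproducing kernel identity \eqref{reproducing},
\[
\KB{f}(\ss[XT]) = \ss[XT]\cdot \scalar{\ss[A(XT-1)]}{f[A]}_A = \ss[XT]\cdot f[XT-1].
\]

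Next I would expand $f[XT-1]$ in the basis $\{s_\alpha[X] s_\beta[T]\}$. Setting $g = f[X-1]$ (a single symmetric function in one alphabet), we have $f[XT-1] = g[XT]$ by functoriality of plethystic substitution. The Cauchy identity \eqref{Cauchy} gives $\ss[XT] = \sum_\lambda s_\lambda[X]\, s_\lambda[T]$, and applying \eqref{product} in the $X$ alphabet yields $g[XT] = \ss[XT] \ast_X g[X] = \sum_\lambda (g \ast s_\lambda)[X]\, s_\lambda[T]$. Therefore
\[
\KB{f}(\ss[XT]) = \ss[XT] \sum_\lambda \bigl(f[X-1] \ast s_\lambda\bigr)[X]\, s_\lambda[T] = \Phi_{U,D}(F)
\]
where $F = \sum_\lambda \bigl(f[X-1] \ast s_\lambda\bigr)[A]\, s_\lambda[B]$.

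Finally, by Lemma \ref{coordinate_independent}(2), $\KB{f}$ is the image of $F$ under the map $s_\alpha[A] s_\beta[B] \mapsto U_\alpha D_\beta$, giving the desired identity $\KB{f} = \sum_\lambda U_{f[X-1]\ast s_\lambda}\, D_\lambda$. The only subtle step is the identity $g[XT] = \sum_\lambda (g \ast s_\lambda)[X]\, s_\lambda[T]$; this is clean but it is worth being careful that the Kronecker product here is taken in the $X$ alphabet, which is exactly how \eqref{product} is stated in Lemma \ref{properties:ss}.
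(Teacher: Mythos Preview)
Your proposal is correct and follows essentially the same route as the paper: apply Lemma~\ref{coordinate_independent}(2) with $(P,Q)=(U,D)$, compute $\KB{f}(\ss[XT])=\ss[XT]\,f[XT-1]$, expand $f[XT-1]$ via $f[X-1]\ast\ss[XT]$ using the Cauchy kernel, and read off the coefficients. The only cosmetic difference is that you obtain $\KB{f}(\ss[XT])$ by taking the $A$-scalar product of the generating series \eqref{effectKbar} against $f[A]$, whereas the paper invokes the vertex operator identity \eqref{vertex} directly; these are equivalent since \eqref{effectKbar} was itself derived from \eqref{vertex}.
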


\begin{proof}
The subalgebra of $\operatorname{End}(\Sym)$ generated by the operators $U_g$ and $D_g$, for $g \in \Sym$, is the linear span of the operators $U_{\alpha} D_{\beta}$, for $\alpha$ and $\beta$ partitions. After (2) in Lemma \ref{coordinate_independent}, and the calculations in the proof of Proposition \ref{finite expansions}, it is the set of operators $M$ such that $M(\ss[XT])$ is $\ss[XT]$ times an element of $\Sym(X) \otimes_{\QQ} \Sym(T)$. 

After \eqref{vertex},$\KB{f}(\ss[XT])=\ss[X] f[X-1] * \ss[XT]$, which is equal, after \eqref{product}, to $\ss[XT] f[XT-1]$. This proves Proposition \ref{KBf in U D}.

To get an explicit decomposition of $\KB{f}$, we decompose $f[XT-1]$ as an element of $\Sym(X) \otimes_{\QQ} \Sym(T)$. We start with
\[
f[XT-1] = f[X-1] \ast \sigma[XT].
\]
(The Kronecker product $\ast$ is relative to the symmetric functions in $X$). Thus,
\[
f[XT-1] 
=  f[X-1] \ast \sum_{\lambda} s_{\lambda}[X] s_{\lambda}[T] 
= \sum_{\lambda} f[X-1]\ast s_{\lambda}[X]  s_{\lambda}[T].
\]
Therefore, 
\[
\KB{f}(\ss[XT]) = \Phi_{U,D}\left( \sum_{\lambda}  f[A-1] \ast s_{\lambda}[A] s_{\lambda}[B]\right)
\]
The conclusion comes by applying the second part of (2) in Lemma \ref{coordinate_independent}.
\end{proof}
For instance, for $f=h_{k}$ we have:
$
\KB{(k)} = \sum_{\lambda \vdash k} \U{\lambda} \D{\lambda} - \sum_{\lambda \vdash k-1} \U{\lambda} \D{\lambda}
,
$
since $h_k[X-1]=h_k-h_{k-1}$. The simplest case is $k=1$. Here we obtain
\begin{equation}
\KB{(1)}=\U{(1)}\D{(1)}-1\label{KB1}\,.
\end{equation}
By the Pieri rule, for any partition $\alpha$, 
\[
\U{(1)}\D{(1)} s_{\alpha} = \sum_{\beta} s_{\beta},
\]
where each term in the sum corresponds to a choice of a corner in the diagram of $\alpha$, that is removed, and then a choice of a box that is added, to give the diagram of a partition $\beta$. There are two cases: the box can be added where the corner was removed, or not. Accordingly the sum splits:
\[
\U{(1)}\D{(1)} s_{\alpha} = \noc{\alpha} s_{\alpha} +  \sum_{\beta \in \addremove{\alpha}} s_{\beta}.
\]
Therefore
\[
\KB{(1)} s_{\alpha}=\U{(1)}\D{(1)} s_{\alpha}-s_{\alpha} = (\noc{\alpha}-1) s_{\alpha} +  \sum_{\beta \in \addremove{\alpha}} s_{\beta}\,
\]
which is \eqref{equ:straightcorners}.

%%%%%%%%%%%%%%%%%%%%%%%%%%%%%%%%%%%%%%%%%%%%%%%%%%%%%%%%%%%%%%%%%%%%%%%%%%%%
%%%%%%%%%%%%%%%%%%%%%%%%%%%%%%%%%%%%%%%%%%%%%%%%%%%%%%%%%%%%%%%%%%%%%%%%%%%%

\section{Application to the skew Littlewood--Richardson rule} \label{SkewLRrule}

In this section we present our first application of Theorem~\ref{thm:main}: a new proof of the skew Littlewood--Richardson rule as conjectured by Assaf and the second author \cite{AssafMcNamara} and proved by Lam, Lauve and Sottile \cite{LamLauveSottile}.  As in \cite{LamLauveSottile}, our starting point is  \eqref{eqUalphaDbeta}. In \cite{LamLauveSottile}, first an ``algebraic skew Littlewood--Richardson rule'' is derived, involving sums of products of Littlewood--Richardson coefficients. Then, the combinatorial skew Littlewood--Richardson rule is obtained by interpreting these Littlewood--Richardson coefficients as counting semistandard Young tableaux with given rectification. Our proof fits more closely to the statement of the skew Littlewood--Richardson rule: we avoid going through the algebraic skew Littlewood--Richardson rule and use the interpretation of the Littlewood--Richardson coefficients as counting semistandard Young tableaux with content and Yamanouchi constraints.
Our proof appears in Subsections~\ref{sub:combskewLRrule} and \ref{sub:combskewLRrule2} and is largely combinatorial.  

%In Subsection~\ref{sub:algskewLRrule}, we will give an algebraic proof of the equivalence of \eqref{eqUalphaDbeta} and the algebraic version of the skew Littlewood--Richardson rule.  See \eqref{eqAlgskewLRrule} below for this algebraic version, originally given in \cite{LamLauveSottile}. 
%

For a positive integer $k$ and a partition $\gamma$, the classical Pieri rule \cite{Pieri} gives a simple and beautiful expression for the product $s_{(k)} s_\gamma$ as a sum of Schur functions. A \emph{$k$-horizontal} (resp.\ \emph{$k$-vertical}) \emph{strip} is a skew shape with $k$ boxes that has at most one box in each column (resp.\ row).  The Pieri rule states that
\[
s_{(k)} s_\gamma = \sum_\lambda s_{\plus{\gamma}}\,,
\]
where the sum is over all partitions $\plus{\gamma}$ such that $\plus{\gamma}/\gamma$ is a $k$-horizontal strip. In \cite{AssafMcNamara}, Assaf and the second author generalized the Pieri rule to the setting of skew shapes as follows:
\begin{equation}\label{equ:skewpieri}
s_{(k)} s_{\gamma/\beta} = \sum_{i=0}^k (-1)^i \sum_{\plus{\gamma},\minus{\beta}} s_{\plus{\gamma}/\minus{\beta}}\ ,
\end{equation}
where the sum is over all partitions $\plus{\gamma}$ and $\minus{\beta}$ such that $\plus{\gamma}/\gamma$ is a $(k-i)$-horizontal strip, and $\beta/\minus{\beta}$ is an $i$-vertical strip.  We will use the skew Pieri rule with $k=1$ in Section~\ref{SkewKronecker}.

For the next level of generality, it is natural to ask for a similarly combinatorial expression for $s_\alpha s_{\beta/\gamma}$ for any partition $\alpha$. Equation~\eqref{ReverseFoulkes} gives one expression, but it does not mimic \eqref{equ:skewpieri} in the sense that it does not give the answer as a signed sum of skew Schur functions.  Instead, the skew Littlewood--Richardson rule \cite{LamLauveSottile} gives an expression for the even more general product $s_{\alpha/\delta} s_{\beta/\gamma}$ as a signed sum of skew Schur functions.  In this section we will derive the skew Littlewood--Richardson rule from \eqref{eqUalphaDbeta} in the following way.  In Subsection~\ref{sub:combskewLRrule}, we will use a combinatorial approach to obtain from \eqref{eqUalphaDbeta} the skew Littlewood--Richardson rule in the case when $\delta$ is empty, and then we will use a linearity argument to derive the result for general $\delta$ in Subsection~\ref{sub:combskewLRrule2}.  

\subsection{The combinatorial skew Littlewood--Richardson rule}\label{sub:skewLRrule}

In order to state the skew Littlewood--Richardson rule, we first need some terminology.  As usual, a sequence of positive integers $\omega$ is said to be a \emph{lattice permutation} if any prefix of $\omega$ contains at least as many appearances of $i$ as $i+1$, for all $i\geq1$.  For a partition $\delta$, we will say that $\omega$ is a $\delta$-lattice permutation if the word obtained by prefixing $\omega$ with $\delta_1$ copies of 1 followed by $\delta_2$ copies of 2, etc., is a lattice permutation.  

We will draw our Young tableaux in French notation, implying that the entries of an SSYT weakly increase along the rows and strictly increase up the columns.  An \emph{anti-semistandard Young tableau} (ASSYT) $T_1$ of shape $\alpha/\beta$ is a filling of the boxes of $\alpha/\beta$ so that the entries strictly decrease along the rows and weakly decrease up the columns.  Equivalently, $T_1$ is an ASSYT if the tableau $(T_1')^r$ obtained by transposing $T_1$ and then rotating it 180$^\circ$ is an SSYT. The \emph{reverse reading word of an SSYT} $T_2$ is defined as usual as the word obtained by reading right-to-left along the rows of $T_2$, taking the rows from bottom to top.  In contrast, the \emph{reverse reading word of an ASSYT} $T_1$ is the word obtained by reading up the columns of $T$, taking the columns from right-to-left.  Equivalently, we can take the usual reverse reading work of the SSYT $(T_1')^r$.
Given a pair of tableaux $(T_1, T_2)$, where $T_1$ is an ASSYT and $T_2$ is an SSYT, we define the \emph{reverse reading word of the pair} as the concatenation of the reading word of $T_1$ with that of $T_2$.  We will encounter such pairs as in the the figure below, where the entries in the bottom left form an ASSYT, and the entries above or to the right of the outlined skew shape form an SSYT.  
\begin{equation}\label{equ:ASSYTSSYT}
\begin{tikzpicture}[scale=0.5]
\draw[thick] (3,0) -- (3,2) -- (0,2) -- (0,4) -- (1,4) -- (1,3) -- (4,3) -- (4,2) -- (5,2) -- (5,1) -- (7,1) -- (7,0) --cycle;
\draw (2.5, 0.5) node {2};
\draw (2.5, 1.5) node {1};
\draw (1.5, 0.5) node {3};
\draw (1.5, 1.5) node {3};
\draw (0.5, 1.5) node {5};
\draw (7.5, 0.5) node {2};
\draw (8.5, 0.5) node {4};
\draw (5.5, 1.5) node {1};
\draw (6.5, 1.5) node {4};
\draw (7.5, 1.5) node {4};
\draw (8.5, 1.5) node {5};
\draw (4.5, 2.5) node {3};
\draw (1.5, 3.5) node {5};
\draw (2.5, 3.5) node {6};
\end{tikzpicture}
\end{equation}
The reverse reading word of $(T_1, T_2)$ shown in \eqref{equ:ASSYTSSYT} is 21335425441365, which is certainly not a lattice permutation but is a 5321-lattice permutation.  

We are now ready to state the skew Littlewood--Richardson rule.

\begin{theorem}[Conjecture 6.1 of \cite{AssafMcNamara}; Theorem 3.2 and Remark 3.3(ii) of \cite{LamLauveSottile}]\label{thm:skewLR}
For skew shapes $\alpha/\delta$ and $\gamma/\beta$,
\begin{equation}\label{eqskewLR}
s_{\alpha/\delta} s_{\gamma/\beta} = \sum_{\genfrac{}{}{0pt}{}{T_1 \in \mathrm{ASSYT}(\beta/\minus{\beta})}{T_2 \in \mathrm{SSYT}(\plus{\gamma}/\gamma)}} (-1)^{|\beta/\minus{\beta}|} s_{\plus{\gamma}/\minus{\beta}}\ ,
\end{equation}
where the sum is over all ASSYT $T_1$ of shape $\beta/\minus{\beta}$ for some
$\minus{\beta} \subseteq \beta$, and SSYT $T_2$ of shape $\plus{\gamma}/\gamma$ for some
$\plus{\gamma} \supseteq \gamma$, with the following properties:

\begin{enumerate}
\renewcommand{\theenumi}{\alph{enumi}}
\item the combined content of $T_1$ and $T_2$ is the component-wise difference $\alpha-\delta$, and
\item the reverse reading word of $(T_1,T_2)$ is a $\delta$-lattice permutation.
\end{enumerate}
\end{theorem}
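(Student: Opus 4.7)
The proof will be in two stages, following the outline given before the theorem statement. Stage 1 establishes the case $\delta = \emptyset$ directly from \eqref{eqUalphaDbeta}, and Stage 2 lifts the result to arbitrary $\delta$ by a linearity argument together with a bijection on tableaux.

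For Stage 1, apply \eqref{eqUalphaDbeta} to $s_\gamma$ to obtain \eqref{ReverseFoulkes},
\[
s_\alpha s_{\gamma/\beta} = \sum_\lambda (-1)^{|\lambda|} \D{\beta/\lambda'}(s_{\alpha/\lambda}\, s_\gamma).
\]
The plan is to expand the right-hand side combinatorially: write $s_{\alpha/\lambda}$ as a monomial sum over SSYT of shape $\alpha/\lambda$, expand $s_{\beta/\lambda'}$ as a sum over SSYT of shape $\beta/\lambda'$, and use Foulkes' identity \eqref{Foulkes} to distribute the skewing operator $\D{\beta/\lambda'}$ across the product $s_{\alpha/\lambda} \cdot s_\gamma$. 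Each resulting summand should match a contribution $(-1)^{|\lambda|} s_{\plus{\gamma}/\minus{\beta}}$ indexed by a pair $(T_1, T_2)$, where $T_1$ records entries removed from $\beta$ (occupying $\beta/\minus{\beta}$) and $T_2$ records entries added to $\gamma$ (occupying $\plus{\gamma}/\gamma$), with combined content $\alpha$. The $\lambda$-summation produces many more pairs than those allowed by the skew Littlewood--Richardson rule, so the heart of Stage 1 is a sign-reversing involution on pairs $(T_1, T_2)$ whose reverse reading word fails to be a lattice permutation, leaving exactly the pairs in \eqref{eqskewLR} with $\delta = \emptyset$ as fixed points.

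For Stage 2, expand $s_{\alpha/\delta} = \sum_\sigma c^\alpha_{\delta,\sigma} s_\sigma$, where $c^\alpha_{\delta,\sigma}$ counts SSYT $S$ of shape $\alpha/\delta$, content $\sigma$, with lattice reverse reading word. Substituting and applying Stage 1 to each $s_\sigma s_{\gamma/\beta}$ yields
\[
s_{\alpha/\delta}\, s_{\gamma/\beta} = \sum_{S,\,(T_1, T_2)} (-1)^{|\beta/\minus{\beta}|}\, s_{\plus{\gamma}/\minus{\beta}},
\]
the sum ranging over triples in which $(T_1, T_2)$ has combined content equal to $\mathrm{content}(S)$ and lattice reverse reading word. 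What remains is a bijection from these triples to pairs $(T_1', T_2')$ of combined content $\alpha - \delta$ with $\delta$-lattice reverse reading word, preserving the outer skew shape $\plus{\gamma}/\minus{\beta}$ and hence the sign. A natural construction exploits the defining property of $\delta$-lattice words: prefixing such a word of content $\alpha - \delta$ with the Yamanouchi word of shape $\delta$ produces a lattice word of content $\alpha$, allowing one to splice the reading word of $S$ with that of $(T_1, T_2)$ and then relabel consistently.

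The main obstacle will be the sign-reversing involution of Stage 1, which must match the detailed tableau structure arising from the Foulkes-type expansion of $\D{\beta/\lambda'}(s_{\alpha/\lambda}\, s_\gamma)$ and correctly identify which pairs $(T_1, T_2)$ violate the lattice condition; designing the involution so that it preserves the ASSYT/SSYT decomposition and the outer shape $\plus{\gamma}/\minus{\beta}$ is delicate. The Stage 2 bijection is secondary but also non-trivial, since it must simultaneously respect the ASSYT component $T_1$, the SSYT component $T_2$, and the relabeling induced by $S$.
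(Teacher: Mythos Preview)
Your Stage~1 has a genuine gap: you propose that the $\lambda$-sum in \eqref{ReverseFoulkes} produces surplus pairs that must be cancelled by a sign-reversing involution, and you admit you do not know how to construct it. In fact no involution is needed. The paper's key observation is an adjointness manipulation: the coefficient of $s_{\plus{\gamma}}$ in $s_{\alpha/\lambda}s_\gamma$ equals
\[
\scalar{s_{\plus{\gamma}}}{s_{\alpha/\lambda}s_\gamma}=\scalar{s_{\plus{\gamma}/\gamma}\,s_\lambda}{s_\alpha},
\]
which by the classical Littlewood--Richardson rule counts SSYT $T_2$ of shape $\plus{\gamma}/\gamma$ with content $\alpha-\lambda$ whose reverse reading word is a $\lambda$-lattice permutation. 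Separately, the coefficient of $s_{\minus{\beta}}$ in $s_{\beta/\lambda'}$ is $c^{\beta}_{\lambda',\minus{\beta}}$, the number of LR-fillings of $\beta/\minus{\beta}$ with content $\lambda'$; a shape-preserving, content-transposing bijection $\psi$ (sending the $i$th occurrence of $j$ in the reverse reading word to the entry $i$) converts these into ASSYT $T_1$ of shape $\beta/\minus{\beta}$ with content $\lambda$ and lattice reverse reading word. Since the content of $T_1$ is exactly $\lambda$, the sum over $\lambda$ is absorbed, and the pair $(T_1,T_2)$ already satisfies conditions (a) and (b) with $\delta=(0)$ on the nose. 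Your plan of expanding $s_{\alpha/\lambda}$ as a monomial sum and distributing $D_{\beta/\lambda'}$ via Foulkes goes in the wrong direction and creates the cancellation problem you then cannot solve.

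Your Stage~2 is broadly correct in spirit---linearity does the work---but the paper runs it in the opposite direction, avoiding the triple-to-pair bijection you sketch. One observes that the coefficient of $(-1)^{|\beta/\minus{\beta}|}s_{\plus{\gamma}/\minus{\beta}}$ on the right of \eqref{eqskewLR} is the number of SSYT of shape $(\plus{\gamma}/\gamma)\oplus(\beta'/\minus{\beta}')^r\oplus\delta$ with content $\alpha$ and lattice reverse reading word, hence equals $\scalar{s_{\plus{\gamma}/\gamma}\,s_{(\beta'/\minus{\beta}')^r}}{s_{\alpha/\delta}}$. This rewrites \eqref{eqskewLR} as an identity linear in $s_{\alpha/\delta}$, so it suffices to verify it for $s_{\alpha/\delta}=s_\alpha$, which is Stage~1. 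Your proposed bijection between triples $(S,T_1,T_2)$ and pairs $(T_1',T_2')$ would need to be made precise; the paper sidesteps it entirely.
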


For example, the ASSYT and SSYT pair of \eqref{equ:ASSYTSSYT} contribute $-s_{9953/1}$ to the product $s_{755431/5321} s_{7541/33}$.  Note that when $\beta$ and $\delta$ are empty, we recover the classical Littlewood--Richardson rule.

\subsection{Recovering a special case of the combinatorial skew Littlewood--Richardson rule}
\label{sub:combskewLRrule}
Our first step to reproving Theorem~\ref{thm:skewLR} is to start with \eqref{eqUalphaDbeta} and show it implies Theorem~\ref{thm:skewLR} in the case when $\delta=(0)$, the empty partition.  Instead of \eqref{eqUalphaDbeta}, we work with the equivalent \eqref{ReverseFoulkes}: %and first make the trivial change of conjugating $\lambda$ to get

%L\begin{equation}%\label{equ:conjugate}
\[
s_{\alpha} s_{\gamma/\beta}=\sum_{\lambda} (-1)^{|\lambda|} \D{\beta/\lambda'} (s_{\alpha/\lambda}s_{\gamma}).
\]
%\tag{\ref{ReverseFoulkes}}
%\end{equation}

First, let us examine the product $s_{\alpha/\lambda}s_{\gamma}$ from the right--hand side, and expand it in terms of Schur functions.  Note that only those $s_\nu$ with $\nu \supseteq \gamma$ will appear in the Schur expansion with nonzero coefficient.  Thus we can write 
\[
s_{\alpha/\lambda}s_{\gamma} = \sum_{\plus{\gamma} \supseteq \gamma} a_{\plus{\gamma}} s_{\plus{\gamma}}\,.
\]
We have 
\[
a_{\plus{\gamma}} = \scalar{s_{\plus{\gamma}}}{s_{\alpha/\lambda}s_{\gamma}} = \scalar{s_{\plus{\gamma}/\gamma}}{s_{\alpha/\lambda}} = \scalar{s_{\plus{\gamma}/\gamma} s_\lambda}{s_\alpha}.
\]
The product $s_{\plus{\gamma}/\gamma} s_\lambda$ is equal to the skew Schur function of the shape $(\plus{\gamma}/\gamma) \oplus \lambda$.  (The notation $\oplus$ denotes that $\plus{\gamma}/\gamma$ is positioned so that its bottom-right corner box is immediately northwest of the top-left corner box of $\lambda$.)  Therefore, the coefficient $a_{\plus{\gamma}}$ is equal to the number of Littlewood--Richardson fillings (LR-fillings) of that skew shape that have content $\alpha$.  Any LR-filling of that shape must just fill the $i$th row of $\lambda$ with the number $i$, for all $i$.  Thus $a_{\plus{\gamma}}$ equals the number of SSYT of shape $\plus{\gamma}/\gamma$ whose reverse reading word is a  $\lambda$-lattice permutation and whose content is the component-wise difference $\alpha-\lambda$.  Hence \eqref{ReverseFoulkes} is equivalent to
\begin{equation}\label{equ:gammayamanouchi}
s_{\alpha} s_{\gamma/\beta} = \sum_{\lambda} (-1)^{|\lambda|} \D{\beta/\lambda'} \sum_{T_2} s_{\plus{\gamma}}
\end{equation}
where second sum is over all SSYT $T_2$ of shape $\plus{\gamma}/\gamma$ for some $\plus{\gamma}\supseteq \gamma$, and content $\alpha-\lambda$, whose reverse reading word is a $\lambda$-lattice permutation.

Next, we will examine the term $s_{\beta/\lambda'}$.  The coefficient of $s_\nu$ in this term is exactly the Littlewood--Richardson coefficient $c^\beta_{\lambda'\nu}$\,, which is nonzero only if $\nu \subseteq \beta$.  Thus we wish to determine the coefficient of $s_{\minus{\beta}}$ in $s_{\beta/\lambda'}$ when $\minus{\beta} \subseteq \beta$, which equals the number of LR-fillings of $\beta/\minus{\beta}$ of content $\lambda'$. We claim that such fillings $T$ are in a shape-preserving bijection with ASSYT that are lattice permutations of content $\lambda$ (as opposed to content $\lambda'$ previously).  Indeed the bijection $\psi$ is defined as mapping the $i$th appearance (in the reverse reading word of the SSYT $T$) of the number $j$ to the number $i$, for all $i$ and $j$.  For example,
\[
\Yvcentermath1
\young(34,233,:122,::1111)\ \  \longrightarrow\ \  \young(31,321,:521,::4321).
\]
Then, one can check that $\psi$ has the following necessary properties.
\begin{itemize}
\item The inverse of $\psi$ is given by the ASSYT analogue of $\psi$: map the $j$th appearance (in the reverse reading word, now in the ASSYT sense) of the number $i$ to the number $j$, for all $i$ and $j$.
\item The image $\psi(T)$ of an LR-filling $T$ is indeed an ASSYT whose reverse reading word is a lattice permutation.
\item Such a $\psi(T)$ maps to an LR-filling under the inverse map.  
\item Both $\psi$ and its inverse transpose the content partition.
\end{itemize}

Thus \eqref{equ:gammayamanouchi} is equivalent to
\[
s_{\alpha} s_{\gamma/\beta} = \sum_{\lambda} (-1)^{|\lambda|} \sum_{T_1} \D{\minus{\beta}}
\sum_{T_2} s_{\plus{\gamma}} = \sum_{\lambda} (-1)^{|\lambda|} \sum_{T_1, T_2} s_{\plus{\gamma}/\minus{\beta}}\,,
\]
where the relevant sums are over all $T_1$ and $T_2$ such that
\begin{itemize}
%\item for some $\nu \subseteq \beta$, $T_1$ is an ASSYT of shape $\beta/\nu$ and content $\lambda$ whose reverse reading word is Yamanouchi, and
%\item for some $\mu \supseteq \gamma$, $T_2$ is an SSYT of shape $\mu/\gamma$ and content $\alpha-\lambda$ whose reverse reading word is $\lambda$-Yamanouchi.
\item $T_1$ is an ASSYT having content $\lambda$, whose reverse reading word is a lattice permutation, and with shape $\beta/\minus{\beta}$ for some $\minus{\beta} \subseteq \beta$, and
\item $T_2$ is an SSYT having content $\alpha-\lambda$, a $\lambda$-lattice permutation as reverse reading word, and shape $\plus{\gamma}/\gamma$ for some $\plus{\gamma}\supseteq\gamma$.
\end{itemize}
Note that $T_1$ tells us that $|\lambda| = |\beta/\minus{\beta}|$, and we have arrived at Theorem~\ref{thm:skewLR} in the case when $\delta=(0)$.

\subsection{Recovering the full combinatorial skew Littlewood--Richardson rule}\label{sub:combskewLRrule2}

Our second step is to use a linearity argument to derive Theorem~\ref{thm:skewLR} for general $\delta$.  For this, observe that the coefficient of $(-1)^{|\beta/\minus{\beta}|} s_{\plus{\gamma}/\minus{\beta}}$ on the right-hand side of \eqref{eqskewLR} is the number of pairs of tableaux $(T_1, T_2)$ with $T_1$ an ASSYT and $T_2$ a SSYT, fulfilling conditions (a) and (b) in Theorem~\ref{thm:skewLR}. But $T_1$ being an ASSYT is equivalent to $(T_1')^r$ being an SSYT, and the reverse reading word of the ASSYT $T_1$ is defined so that $(T_1')^r$ has the same reverse reading word as an SSYT. Therefore, the coefficient of $(-1)^{|\beta/\minus{\beta}|} s_{\plus{\gamma}/\minus{\beta}}$ on the right-hand side of \eqref{eqskewLR} equals the number of SSYT of shape $(\plus{\gamma}/\gamma) \oplus (\beta'/\minus{\beta'})^r$ and content $\alpha$ whose reverse reading word is a $\delta$--lattice permutation.  This is the number of SSYT of shape $(\plus{\gamma}/\gamma) \oplus (\beta'/\minus{\beta'})^r \oplus \delta$ and content $\alpha$ whose reverse reading word is a lattice  permutation.  By the Littlewood--Richardson rule, this quantity equals the coefficient of $s_\alpha$ in the Schur expansion of $s_{(\plus{\gamma}/\gamma) \oplus (\beta'/\minus{\beta'})^r \oplus \delta}$. This skew Schur function being equal to the product  $s_{\plus{\gamma}/\gamma} s_{(\beta'/\minus{\beta'})^r}s_{\delta}$, this coefficient is equal to
\[
\scalar{s_{\plus{\gamma}/\gamma} s_{(\beta'/\minus{\beta'})^r}s_\delta}{s_\alpha}\,,
\]
which is equal to
\[
\scalar{s_{\plus{\gamma}/\gamma} s_{(\beta'/\minus{\beta'})^r}}{s_{\alpha/\delta}}\,.
\]
Therefore, \eqref{eqskewLR} is equivalent to 
\begin{equation}\label{equ:altskewLR}
s_{\alpha/\delta}  s_{\gamma/\beta} =  \sum_{\plus{\gamma}, \minus{\beta'}} (-1)^{|\beta/\minus{\beta}|} \scalar{s_{\plus{\gamma}/\gamma} s_{(\beta'/\minus{\beta'})^r}}{s_{\alpha/\delta}} s_{\plus{\gamma}/\minus{\beta}}\,,
\end{equation}
where the sums are over all partitions $\plus{\gamma}$ and $\minus{\beta'}$ such that $\plus{\gamma} \supseteq \gamma$ and $\minus{\beta} \subseteq \beta$. 

The key observation is that \eqref{equ:altskewLR} is linear in $s_{\alpha/\delta}$. Given any partitions $\beta$ and $\gamma$, \eqref{equ:altskewLR} will be true for all partitions $\alpha$ and $\delta$ when 
\[
f \cdot s_{\gamma/\beta} = \sum_{\plus{\gamma}, \minus{\beta'}} (-1)^{|\beta/\minus{\beta}|} \scalar{s_{\plus{\gamma}/\gamma} s_{(\beta'/\minus{\beta'})^r}}{f} s_{\plus{\gamma}/\minus{\beta}}\ ,
\]
holds for any symmetric function $f$. Since the Schur functions form a basis for the space of symmetric functions, it is enough to check it for $f=s_\alpha$, all partitions $\alpha$. This is what was done in Subsection~\ref{sub:combskewLRrule}.

\section{A combinatorial interpretation for the Kronecker product of a skew Schur function by  $s_{(n-1,1)}$.}\label{SkewKronecker}

As another application of the identities of Section~\ref{sec:commutations}, our goal for this section is to derive a combinatorial formula for Kronecker products involving skew Schur functions. Let $\alpha$ be a partition of $n$, and let us speak of partitions and their Young diagrams interchangeably.   

A well-known case of  $\KB{\lambda}$ is when $\lambda=(1)$. We pause to describe an identity that we will generalize in Section \ref{SkewKronecker} using the normal ordering relations. 
A \emph{corner} of $\alpha$ is a box of $\alpha$ whose removal results in another partition, and we denote by $\noc{\alpha}$ the number of corners of $\alpha$.  Denote by $\remove{\alpha}$ the set of partitions that result from removing a corner of $\alpha$.  Similarly, $\add{\alpha}$ will denote the set of those partitions $\beta$ such that $\alpha \in \remove{\beta}$.  We use $\addremove{\alpha}$ to denote the set of partitions not equal to $\alpha$ that can be obtained by removing a corner of $\alpha$ and then adding a box to the result.  Equivalently, $\addremove{\alpha}$ is the set of partitions that can be obtained from $\alpha$ by first adding a box and then removing a different box.   For example, $31$ has two corners, and $\addremove{(31)} = \{(4), (22), (211)\}$.
We finish Section~\ref{uniqueness} by relating the combinatorial identity
\begin{equation}\label{equ:straightcorners}
\KB{(1)}s_{\alpha} = (\noc{\alpha} - 1) s_\alpha + \sum_{\beta \in \addremove{\alpha}} s_\beta\,.
\end{equation}
to the decomposition of the operators $\KB{f}$ in terms of operator $U_g$ and $D_g$.

We aim to generalize \eqref{equ:straightcorners} to skew Schur functions. This leads to our next use of the identities of Section 2.
Corollary~\ref{commutators} implies the relation $[\D{\theta}, \KB{(1)}]=\D{s_{\theta/(1)} s_1}$, %from which
%Applying the commutator relation $[\D{\theta}, \KB{(1)}]=\D{s_{\theta/(1)} s_1}$ (from Corollary \ref{commutators}) to $s_\alpha$, 
%we get the following series of equalities. 
which gives
\[
\KB{(1)} \D{\theta} (s_\alpha) =  \D{\theta} \KB{(1)} (s_\alpha) - \D{s_{\theta/(1)} s_1} (s_\alpha). 
\]
Applying \eqref{equ:straightcorners} and the fact that \[s_{\theta/(1)} s_1 = \noc{\theta}s_\theta + \sum_{\phi \in \addremove{\theta}} s_\phi,\]
we get
\[
s_{\alpha/\theta} * s_{(n-|\theta|-1,1)} %& = & \D{\theta} (s_\alpha * s_{(n-1,1)}) - \D{\noc{\theta}s_\theta + \sum_{\phi \in \addremove{\theta}} s_\phi}(s_\alpha) \\
= (\noc{\alpha} - \noc{\theta} - 1)s_{\alpha/\theta}  + \sum_{\beta \in \addremove{\alpha}} s_{\beta/\theta} - \sum_{\phi \in \addremove{\theta}} s_{\alpha/\phi}\ ,
\]

Thus we have an algebraic proof of the following.

\begin{theorem}\label{thm:skewcorners}
Suppose $\alpha \vdash n$ and $\theta \vdash k$ with $\theta \subseteq \alpha$.  Then
\[
s_{\alpha/\theta} * s_{(n-k-1,1)} = (\noc{\alpha} - \noc{\theta} - 1)s_{\alpha/\theta} + \sum_{\beta \in \addremove{\alpha}} s_{\beta/\theta} - \sum_{\phi \in \addremove{\theta}} s_{\alpha/\phi}\, .
\]
\end{theorem}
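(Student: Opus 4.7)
The plan is to exploit the normal-ordering machinery of Theorem~\ref{thm:main} to reduce the statement to the known identity \eqref{equ:straightcorners} for $\KB{(1)}(s_{\alpha})$. Observe that $s_{\alpha/\theta} = \D{\theta}(s_{\alpha})$ is homogeneous of degree $n-k$, so the left-hand side equals $\KB{(1)} \D{\theta}(s_{\alpha})$. I want to move $\KB{(1)}$ past $\D{\theta}$ via the commutator formula in Corollary~\ref{commutators}, apply \eqref{equ:straightcorners} to the resulting $\KB{(1)}(s_{\alpha})$, and identify the correction term combinatorially.

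Computing the commutator $[\D{\theta}, \KB{(1)}]$ is the one delicate step, and is where I expect the main obstacle. Applying the third identity of Corollary~\ref{commutators} with $\alpha \leftrightarrow \theta$ and $\beta \leftrightarrow (1)$, the sum collapses drastically: since $s_{(1)/\nu} \ast s_{\tau}$ is nonzero only when $\nu \subseteq (1)$ and $|\tau| = 1 - |\nu|$, the only admissible pair other than the excluded $((0),(1))$ is $(\tau,\nu) = ((1),(0))$. Using $s_{(1)} \ast s_{(1)} = s_{(1)}$ together with the fact that $\KB{(0)}$ acts as the identity on every homogeneous component, the sole surviving term is $\D{s_{(1)} \cdot s_{\theta/(1)}}$. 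Hence $\KB{(1)}\D{\theta} = \D{\theta}\KB{(1)} - \D{s_{(1)} s_{\theta/(1)}}$ as operators on $\Sym$.

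The remainder is assembly. Applied to $s_\alpha$, the first piece $\D{\theta}\KB{(1)}(s_\alpha)$ becomes $(\noc{\alpha}-1)\, s_{\alpha/\theta} + \sum_{\beta \in \addremove{\alpha}} s_{\beta/\theta}$ by \eqref{equ:straightcorners} together with the linearity of $\D{\theta}$. For the correction term, I would expand $s_{\theta/(1)} \cdot s_{(1)}$ by a two-step Pieri calculation: first, $s_{\theta/(1)}$ is the sum of $s_\mu$ over the partitions $\mu$ obtained from $\theta$ by removing one corner; then, multiplying each such $s_\mu$ by $s_{(1)}$ either restores the removed box (contributing $s_\theta$ with multiplicity $\noc{\theta}$) or adds a different outer corner (contributing one copy of each $s_\phi$ for $\phi \in \addremove{\theta}$). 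Subtracting the resulting $\D{s_{(1)} s_{\theta/(1)}}(s_\alpha) = \noc{\theta}\, s_{\alpha/\theta} + \sum_{\phi \in \addremove{\theta}} s_{\alpha/\phi}$ then produces the three terms on the right-hand side as claimed.
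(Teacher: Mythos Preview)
Your proposal is correct and follows essentially the same route as the paper's algebraic proof preceding Theorem~\ref{thm:skewcorners}: both specialize the commutator formula of Corollary~\ref{commutators} to obtain $[\D{\theta},\KB{(1)}]=\D{s_{\theta/(1)} s_{(1)}}$, apply \eqref{equ:straightcorners}, and use the Pieri expansion $s_{\theta/(1)} s_{(1)} = \noc{\theta}\,s_\theta + \sum_{\phi \in \addremove{\theta}} s_\phi$. Your write-up is in fact slightly more explicit about why the commutator sum collapses to a single term and why $\KB{(0)}$ acts as the identity.
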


This results begs for a  a combinatorial proof.  We offer a proof which is ``two-thirds'' combinatorial.  The part which is non-combinatorial makes use of \eqref{KB1}, which is turn is proved using Littlewood's Identity \eqref{TrueLittlewood}.

\begin{proof}[Proof 2 of Theorem \ref{thm:skewcorners}]
The proof will work in three stages.  In the short first stage, which is the non-combinatorial one, we will apply \eqref{KB1} to express $s_{\alpha/\theta} * s_{(n-k-1,1)}$ in a form \eqref{equ:nokronecker} not involving any Kronecker products.  Then, using the skew Pieri rule \eqref{equ:skewpieri}, we will reduce the problem to showing an identity \eqref{equ:tableauxmanipulation} that is effectively purely about SSYT.  This identity will be proved in the third stage using jeu de taquin.

First, we will need to determine the result of applying $\D{(1)}$ to the skew Schur function $s_{\alpha/\theta}$.  We have
\[
\scalar{s_\beta}{\D{(1)} s_{\alpha/\theta}} 
= \scalar{s_\beta s_{(1)}}{s_{\alpha/\theta}} 
= \sum_{\delta\in\add{\theta}} \scalar{s_\beta s_\delta}{s_\alpha} 
=   \scalar{s_\beta}{\sum_{\delta\in\add{\theta}} s_{\alpha/\delta}},
\]
and so $\D{(1)} s_{\alpha/\theta} = \sum_{\delta\in\add{\theta}} s_{\alpha/\delta}$.
Applying \eqref{KB1} to $s_{\alpha/\theta}$, we immediately deduce
\begin{equation}\label{equ:nokronecker}
s_{\alpha/\theta} * s_{(n-k-1,1)} = s_{(1)} \sum_{\delta \in \add{\theta}} s_{\alpha/\delta} - s_{\alpha/\theta}\,.
\end{equation}

We next wish to apply the skew Pieri rule to $s_{(1)} \sum_{\delta \in \add{\theta}} s_{\alpha/\delta}$, but it will prove worthwhile to perform a preliminary step.  By definition, $s_{\alpha/\delta} = 0$ unless $\delta \subseteq \alpha$, so it suffices to sum over those $\delta$ such that $\delta \in \add{\theta}$ and $\delta \subseteq \alpha$.  We will denote that $\delta$ satisfies both conditions by writing $\delta \in\addrestrict{\theta}{\alpha}$.  So we now apply the skew Pieri rule to 
\[
s_{\alpha/\theta} * s_{(n-k-1,1)} = s_{(1)} \sum_{\delta \in \addrestrict{\theta}{\alpha}} s_{\alpha/\delta} - s_{\alpha/\theta}
\]
to yield
\[
s_{\alpha/\theta} * s_{(n-k-1,1)} 
= \sum_{\genfrac{}{}{0pt}{}{\gamma \in \add{\alpha}}{\delta \in \addrestrict{\theta}{\alpha}}} s_{\gamma/\delta} 
- \sum_{\genfrac{}{}{0pt}{}{\delta \in \addrestrict{\theta}{\alpha}}{\phi\in \remove{\delta}}} s_{\alpha/\phi}
- s_{\alpha/\theta}\ .
\]

Let us examine the second sum.  For any $\delta$ in $\addrestrict{\theta}{\alpha}$, we can choose $\phi = \theta$.  We see that the other $\phi$ that arise will be exactly those elements of $\addremove{\theta}$ that are contained in $\alpha$.  Therefore, 
\[
s_{\alpha/\theta} * s_{(n-k-1,1)} = \sum_{\genfrac{}{}{0pt}{}{\gamma \in \add{\alpha}}{\delta \in \addrestrict{\theta}{\alpha}}} s_{\gamma/\delta} - |\addrestrict{\theta}{\alpha}| s_{\alpha/\theta} -  \sum_{\phi \in \addremove{\theta}} s_{\alpha/\phi} - s_{\alpha/\theta}\ .
\]
Thus to prove Theorem~\ref{thm:skewcorners}, it remains to show that
\begin{equation}\label{equ:tableauxmanipulation}
\sum_{\genfrac{}{}{0pt}{}{\gamma \in \add{\alpha}}{\delta \in \addrestrict{\theta}{\alpha}}} s_{\gamma/\delta} - |\addrestrict{\theta}{\alpha}| s_{\alpha/\theta}
=  (\noc{\alpha} - \noc{\theta})s_{\alpha/\theta} + \sum_{\beta \in \addremove{\alpha}} s_{\beta/\theta}\ .
\end{equation}

Our main tool for proving the above identity will be jeu de taquin but, like with our application of the skew Pieri rule, it will be worthwhile to rewrite \eqref{equ:tableauxmanipulation} in a slightly different form.  Observe that for any partition $\alpha$, we have $\noc{\alpha} = |\add{\alpha}|-1$.  
For $\theta \subseteq \alpha$, denote those elements of $\add{\theta}$ that are not contained in $\alpha$ by $\addcomplement{\theta}{\alpha}$, which we can check can only be non-empty if $\alpha/\theta$ has some empty rows or columns.
  We can now rewrite \eqref{equ:tableauxmanipulation} as
\begin{equation}\label{equ:tableauxmanipulation2}
\sum_{\genfrac{}{}{0pt}{}{\gamma \in \add{\alpha}}{\delta \in \addrestrict{\theta}{\alpha}}} s_{\gamma/\delta} 
=  (|\add{\alpha}| - |\addcomplement{\theta}{\alpha}|)s_{\alpha/\theta} + \sum_{\beta \in \addremove{\alpha}} s_{\beta/\theta}\ .
\end{equation}
For intuition, we can call the positions of the form $\lambda/\alpha$ for some $\lambda \in \add{\alpha}$ the \emph{outside corners} of $\alpha$.  Then the term $|\add{\alpha}| - |\addcomplement{\theta}{\alpha}|$ is the number of outside corners of $\alpha$, excluding those that are also outside corners of $\theta$.  See Example~\ref{exa:jdt} below for a fully worked example of the remainder of the proof.

To prove \eqref{equ:tableauxmanipulation2} using jeu de taquin (jdt), consider an SSYT $T$ that contributes to the left-hand side, meaning $T$ has shape $\gamma/\delta$, where $\gamma \in \add{\alpha}$ and $\delta \in \addrestrict{\theta}{\alpha}$.  Notice that the unique box $b$ of $\delta/\theta$ is not an element of $T$, and that the unique box $c$ of $\gamma/\alpha$ is in $T$. Perform a jdt slide of $T$ into $b$, and let $T'$ denote the resulting SSYT.  There are three possibilities that can arise.
\begin{enumerate}
\renewcommand{\theenumi}{\alph{enumi}}
\item $T'=T$, meaning there is no way to fill $b$ under a jdt slide of $T$. 
\item $T'$ contains $b$, and the single vacated box under the jdt slide is not $c$.
\item $T'$ contains $b$, and the single vacated box under the jdt slide is $c$.
\end{enumerate}

By definition of jdt, Case (a) can happen if and only if $b$ is a corner of $\gamma$.  Since $b \in \delta \subseteq \alpha \subset \gamma$, it must also be the case that $b$ is a corner of $\alpha$.  Therefore, since $b$ is the unique box of $\delta/\theta$ and is not an element of $T'$ while $c$ is the unique box of $\gamma/\alpha$ and is in $T'$, the shape $\gamma/\delta$ of $T'$ can be written in the equivalent form $\beta/\theta$, where $\beta$ is obtained from $\alpha$ by removing $b$ and adding $c$.  In particular, $\beta \in \addremove{\alpha}$, and so such $T'$ contribute part of the sum on the right of \eqref{equ:tableauxmanipulation2}.

We claim that Case (b) contributes the rest of the sum on the right of \eqref{equ:tableauxmanipulation2}.  %First, let us understand how Case (b) arises.  
We see that the shape of $T'$ is obtained from the shape of $T$ by making exactly two changes: $T'$ contains $b$, and a box $c'$ different from $c$ has been vacated.  As a result, $T'$ has shape $\beta/\theta$ for some $\beta \in \addremove{\alpha}$. 

To prove our claim from the start of the previous paragraph, let $T'$ be an SSYT of shape $\beta/\theta$, where $\beta \in \addremove{\alpha}$.  We wish to show that $T'$ arises as the image under jdt of exactly one $T$ from Cases (a) and (b).  In short, the reason is that jdt slides are reversible, but let us be more precise.  Suppose $\beta$ is obtained from $\alpha$ by removing a box $d$ and adding a different box $e$.  Perform a jdt slide of $T'$ into $d$. (Some references would call this a \emph{reverse} jdt slide, since $d$ is outside $\beta$.)  There are two possibilities that can arise.
\begin{enumerate}
\renewcommand{\theenumi}{\roman{enumi}}
\item There is no way to fill $d$ under a jdt slide of $T'$.  This can happen if and only if $d$ is an outside corner of $\theta$.  Thus $\beta/\theta$ can be written in the equivalent form $\gamma/\delta$, where $\gamma$ equals $\alpha$ with $e$ added, and $\delta$ equals $\theta$ with $d$ added.  Since $d$ is a corner of $\alpha$, we also have that $d$ is a corner of $\gamma$.  These are exactly the conditions for $T'$ to arise as an image under jdt of a $T$ from Case (a) (where in fact $T=T'$ and our $d$ here corresponds to $b$ in Case (a)).
\item Performing a jdt slide of $T'$ into $d$ fills $d$ and vacates an outside corner of $\theta$.  This is exactly the reverse of the jdt slide from Case (b) (where $d$ is playing the role of $c'$). \end{enumerate}
Thus $T'$ arises as the image of a single $T$ from Cases (a) and (b).  

It remains to consider Case (c).  Note that all $T'$ in Case (c) are of shape $\alpha/\theta$.  We would like to show that each $T'$ is the image under jdt of $k$ distinct $T$, where $k=(|\add{\alpha}| - |\addcomplement{\theta}{\alpha}|)$.  This would show that the $T'$ from Case (c) together contribute the term
$(|\add{\alpha}| - |\addcomplement{\theta}{\alpha}|)s_{\alpha/\theta}$ from the right-hand side of \eqref{equ:tableauxmanipulation2}, and \eqref{equ:tableauxmanipulation2} would be proved.  

So pick a $T'$ from Case (c).  Pick an outside corner $c$ of $\alpha$, and perform a (reverse) jdt slide of $T'$ into $c$.  If $c$ is not an outside corner of $\theta$, then this jdt slide will fill $c$, and the result will be an SSYT $T$ of shape $\gamma/\delta$ with $\gamma \in \add{\alpha}$ and $\delta \in \addrestrict{\theta}{\alpha}$.  These are exactly the conditions for $T'$ to arise in Case (c).  

On the other hand, if $c$ is an outside corner of $\alpha$ and also of $\theta$, then $T'$ will remain fixed under the (reverse) jdt slide.  Thus any $T$ that maps to such a $T'$ under a jdt slide must also have shape $\alpha/\theta$.  Such a $T$ from the left-hand side of \eqref{equ:tableauxmanipulation2} does not exist, since $T$ would contain the single box $\gamma/\alpha$ whereas $T'$ does not.  
%Let us explain why such a $T$ does not exist.  For $T$ of shape $\gamma/\delta$ from the left-hand side of \eqref{equ:tableauxmanipulation2} , by the conditions on $\gamma$ and $\delta$, it is obviously never the case that $\gamma=\alpha$ and $\delta=\theta$.  We also have to consider the case when the non-empty boxes of $\alpha/\theta$ equal those of $\gamma/\delta$, even though the underlying partitions are unequal.  Equivalently, we require the position $\gamma/\alpha$ to equal $\delta/\theta$.  However, this situation is prevented from arising by the key condition that $\delta \subseteq \alpha$.  
We conclude that each $T'$ in Case (c) is the image of exactly $k$ distinct $T$, as required.
\end{proof}

\begin{example}\label{exa:jdt}
Suppose $\alpha=(4,1,1)$ and $\theta=(2,1)$.
%, which is small enough that the jdt slides are simple, but sufficient to show the main features of the proof of \eqref{equ:tableauxmanipulation2}.  
\[
\begin{tikzpicture}[scale=0.4]
\tikzstyle{every node}=[inner sep=3pt]; 
\draw (-1.5,1.5) node {$\alpha/\theta=$};
\begin{scope}[xshift=2mm]
\draw[thick] (0,2) rectangle (1,3);
\draw[thick] (2,0) rectangle (3,1);
\draw[thick] (3,0) rectangle (4,1);
\draw[dashed] (0,0) rectangle (1,1);
\draw[dashed] (0,1) rectangle (1,2);
\draw[dashed] (1,0) rectangle (2,1);
\end{scope}
\end{tikzpicture}
\]
The complete set of shapes $\gamma/\delta$, and SSYT $T$ and $T'$ from the proof of \eqref{equ:tableauxmanipulation2} are show in Table~\ref{tab:jdt}.  Deliberately omitted from the table is the scenario from the last paragraph of the proof, where $c$ is an outside corner of both $\alpha$ and $\theta$, which does not contribute to either side of \eqref{equ:tableauxmanipulation2}.  In this example, there is $1=|\addcomplement{\theta}{\alpha}|$ such situation, shown below.
\[
\begin{tikzpicture}[scale=0.4]
\tikzstyle{every node}=[inner sep=3pt]; 
\draw (-1.5,1.5) node {$\alpha/\theta=$};
\begin{scope}[xshift=2mm]
\draw[thick] (0,2) rectangle (1,3);
\draw[thick] (2,0) rectangle (3,1);
\draw[thick] (3,0) rectangle (4,1);
\draw[dashed] (0,0) rectangle (1,1);
\draw[dashed] (0,1) rectangle (1,2);
\draw[dashed] (1,0) rectangle (2,1);
%\draw[thick] (2,1) -- (2,2) -- (1,2);
\draw (1.5,1.5) node {$c$};
\end{scope}
\end{tikzpicture}
\]

\begin{table}
\begin{tabular}{c||c|c||c|c||c|c||}
& \multicolumn{2}{c||}{Case (a)} & \multicolumn{2}{c||}{Case (b)} & \multicolumn{2}{c||}{Case (c)} 
 \\ \hline
%&&&&&&\\
%%%%%%%%%%%%%%%%%%%%%%%%%%%%%%%%%%%%%
\begin{tikzpicture}[scale=0.32]
\draw (0,1) node {$\gamma/\delta$};
\draw (0,0) node {};
\draw (0,4) node {};   %%%%%%% Hacking the vertical spacing ehere
\end{tikzpicture}
& 
\begin{tikzpicture}[scale=0.32]
\begin{scope}
\draw[thick] (1,1) rectangle (2,2);
\draw[thick] (2,0) rectangle (3,1);
\draw[thick] (3,0) rectangle (4,1);
\draw[dashed] (0,0) rectangle (1,1);
\draw[dashed] (0,1) rectangle (1,2);
\draw[dashed] (1,0) rectangle (2,1);
\draw[dashed] (0,2) rectangle (1,3);
\draw (0.5,2.5) node {\footnotesize $b$};
\draw (1.5,1.5) node {\footnotesize $c$};
\end{scope}
\end{tikzpicture}
& 
\begin{tikzpicture}[scale=0.32]
\begin{scope}
\draw[thick] (4,0) rectangle (5,1);
\draw[thick] (2,0) rectangle (3,1);
\draw[thick] (3,0) rectangle (4,1);
\draw[dashed] (0,0) rectangle (1,1);
\draw[dashed] (0,1) rectangle (1,2);
\draw[dashed] (1,0) rectangle (2,1);
\draw[dashed] (0,2) rectangle (1,3);
\draw (0.5,2.5) node {\footnotesize $b$};
\draw (4.5,0.5) node {\footnotesize $c$};
\end{scope}
\end{tikzpicture}
& 
\begin{tikzpicture}[scale=0.32]
\begin{scope}
\draw[thick] (0,3) rectangle (1,4);
\draw[thick] (0,2) rectangle (1,3);
\draw[thick] (3,0) rectangle (4,1);
\draw[dashed] (0,0) rectangle (1,1);
\draw[dashed] (0,1) rectangle (1,2);
\draw[dashed] (1,0) rectangle (2,1);
\draw[dashed] (2,0) rectangle (3,1);
\draw (2.5,0.5) node {\footnotesize $b$};
\draw (0.5,3.5) node {\footnotesize $c$};
\draw (3.5,0.5) node {\footnotesize $c'$};
\end{scope}
\end{tikzpicture}
& 
\begin{tikzpicture}[scale=0.32]
\begin{scope}
\draw[thick] (3,0) rectangle (4,1);
\draw[thick] (0,2) rectangle (1,3);
\draw[thick] (1,1) rectangle (2,2);
\draw[dashed] (0,0) rectangle (1,1);
\draw[dashed] (0,1) rectangle (1,2);
\draw[dashed] (1,0) rectangle (2,1);
\draw[dashed] (2,0) rectangle (3,1);
\draw (2.5,0.5) node {\footnotesize $b$};
\draw (1.5,1.5) node {\footnotesize $c$};
\draw (3.5,0.5) node {\footnotesize $c'$};
\end{scope}
\end{tikzpicture}
& 
\begin{tikzpicture}[scale=0.32]
\begin{scope}
\draw[thick] (4,0) rectangle (5,1);
\draw[thick] (0,2) rectangle (1,3);
\draw[thick] (3,0) rectangle (4,1);
\draw[dashed] (0,0) rectangle (1,1);
\draw[dashed] (0,1) rectangle (1,2);
\draw[dashed] (1,0) rectangle (2,1);
\draw[dashed] (2,0) rectangle (3,1);
\draw (2.5,0.5) node {\footnotesize $b$};
\draw (4.5,0.5) node {\footnotesize $c$};
\end{scope}
\end{tikzpicture}
& 
\begin{tikzpicture}[scale=0.32]
\begin{scope}
\draw[thick] (0,3) rectangle (1,4);
\draw[thick] (2,0) rectangle (3,1);
\draw[thick] (3,0) rectangle (4,1);
\draw[dashed] (0,0) rectangle (1,1);
\draw[dashed] (0,1) rectangle (1,2);
\draw[dashed] (1,0) rectangle (2,1);
\draw[dashed] (0,2) rectangle (1,3);
\draw (0.5,2.5) node {\footnotesize $b$};
\draw (0.5,3.5) node {\footnotesize $c$};
\end{scope}
\end{tikzpicture} 
\\ [2ex]
%&&&&&&\\
%%%%%%%%%%%%%%%%%%%%%%%%%%%%%%%%%%%%%
\begin{tikzpicture}[scale=0.32]
\draw (0,1) node {$T$};
\draw (0,0) node {};
\end{tikzpicture}
&
\begin{tikzpicture}[scale=0.32]
\begin{scope}
\draw[thick] (1,1) rectangle (2,2);
\draw[thick] (2,0) rectangle (3,1);
\draw[thick] (3,0) rectangle (4,1);
\draw[dashed] (0,0) rectangle (1,1);
\draw[dashed] (0,1) rectangle (1,2);
\draw[dashed] (1,0) rectangle (2,1);
\draw[dashed] (0,2) rectangle (1,3);
\draw (0.5,2.5) node {\footnotesize $b$};
\draw (1.5,1.5) node {\footnotesize $A$};
\draw (2.5,0.5) node {\footnotesize $B$};
\draw (3.5,0.5) node {\footnotesize $C$};
\end{scope}
\end{tikzpicture}
& 
\begin{tikzpicture}[scale=0.32]
\begin{scope}
\draw[thick] (4,0) rectangle (5,1);
\draw[thick] (2,0) rectangle (3,1);
\draw[thick] (3,0) rectangle (4,1);
\draw[dashed] (0,0) rectangle (1,1);
\draw[dashed] (0,1) rectangle (1,2);
\draw[dashed] (1,0) rectangle (2,1);
\draw[dashed] (0,2) rectangle (1,3);
\draw (0.5,2.5) node {\footnotesize $b$};
\draw (2.5,0.5) node {\footnotesize \footnotesize $A$};
\draw (3.5,0.5) node {\footnotesize $B$};
\draw (4.5,0.5) node {\footnotesize $C$};
\end{scope}
\end{tikzpicture}
& 
\begin{tikzpicture}[scale=0.32]
\begin{scope}
\draw[thick] (0,3) rectangle (1,4);
\draw[thick] (0,2) rectangle (1,3);
\draw[thick] (3,0) rectangle (4,1);
\draw[dashed] (0,0) rectangle (1,1);
\draw[dashed] (0,1) rectangle (1,2);
\draw[dashed] (1,0) rectangle (2,1);
\draw[dashed] (2,0) rectangle (3,1);
\draw (2.5,0.5) node {\footnotesize $b$};
\draw (0.5,2.5) node {\footnotesize $A$};
\draw (0.5,3.5) node {\footnotesize $B$};
\draw (3.5,0.5) node {\footnotesize $C$};
\end{scope}
\end{tikzpicture}
& 
\begin{tikzpicture}[scale=0.32]
\begin{scope}
\draw[thick] (3,0) rectangle (4,1);
\draw[thick] (0,2) rectangle (1,3);
\draw[thick] (1,1) rectangle (2,2);
\draw[dashed] (0,0) rectangle (1,1);
\draw[dashed] (0,1) rectangle (1,2);
\draw[dashed] (1,0) rectangle (2,1);
\draw[dashed] (2,0) rectangle (3,1);
\draw (2.5,0.5) node {\footnotesize $b$};
\draw (0.5,2.5) node {\footnotesize $A$};
\draw (1.5,1.5) node {\footnotesize $B$};
\draw (3.5,0.5) node {\footnotesize $C$};
\end{scope}
\end{tikzpicture}
& 
\begin{tikzpicture}[scale=0.32]
\begin{scope}
\draw[thick] (4,0) rectangle (5,1);
\draw[thick] (0,2) rectangle (1,3);
\draw[thick] (3,0) rectangle (4,1);
\draw[dashed] (0,0) rectangle (1,1);
\draw[dashed] (0,1) rectangle (1,2);
\draw[dashed] (1,0) rectangle (2,1);
\draw[dashed] (2,0) rectangle (3,1);
\draw (2.5,0.5) node {\footnotesize $b$};
\draw (0.5,2.5) node {\footnotesize $A$};
\draw (3.5,0.5) node {\footnotesize $B$};
\draw (4.5,0.5) node {\footnotesize $C$};
\end{scope}
\end{tikzpicture}
& 
\begin{tikzpicture}[scale=0.32]
\begin{scope}
\draw[thick] (0,3) rectangle (1,4);
\draw[thick] (2,0) rectangle (3,1);
\draw[thick] (3,0) rectangle (4,1);
\draw[dashed] (0,0) rectangle (1,1);
\draw[dashed] (0,1) rectangle (1,2);
\draw[dashed] (1,0) rectangle (2,1);
\draw[dashed] (0,2) rectangle (1,3);
\draw (0.5,2.5) node {\footnotesize $b$};
\draw (0.5,3.5) node {\footnotesize $A$};
\draw (2.5,0.5) node {\footnotesize $B$};
\draw (3.5,0.5) node {\footnotesize $C$};
\end{scope}
\end{tikzpicture} 
\\ [2ex] 
%&&&&&&\\
%%%%%%%%%%%%%%%%%%%%%%%%%%%%%%%%%%%%%
\begin{tikzpicture}[scale=0.32]
\draw (0,1) node {$T'$};
\draw (0,0) node {};
\end{tikzpicture}
&
\begin{tikzpicture}[scale=0.32]
\begin{scope}
\draw[thick] (1,1) rectangle (2,2);
\draw[thick] (2,0) rectangle (3,1);
\draw[thick] (3,0) rectangle (4,1);
\draw[dashed] (0,0) rectangle (1,1);
\draw[dashed] (0,1) rectangle (1,2);
\draw[dashed] (1,0) rectangle (2,1);
\draw (0.5,2.5) node {$d$};
\draw (1.5,1.5) node {\footnotesize $A$};
\draw (2.5,0.5) node {\footnotesize $B$};
\draw (3.5,0.5) node {\footnotesize $C$};
\end{scope}
\end{tikzpicture}
& 
\begin{tikzpicture}[scale=0.32]
\begin{scope}
\draw[thick] (4,0) rectangle (5,1);
\draw[thick] (2,0) rectangle (3,1);
\draw[thick] (3,0) rectangle (4,1);
\draw[dashed] (0,0) rectangle (1,1);
\draw[dashed] (0,1) rectangle (1,2);
\draw[dashed] (1,0) rectangle (2,1);
\draw (0.5,2.5) node {$d$};
\draw (2.5,0.5) node {\footnotesize $A$};
\draw (3.5,0.5) node {\footnotesize $B$};
\draw (4.5,0.5) node {\footnotesize $C$};
\end{scope}
\end{tikzpicture}
& 
\begin{tikzpicture}[scale=0.32]
\begin{scope}
\draw[thick] (0,3) rectangle (1,4);
\draw[thick] (0,2) rectangle (1,3);
\draw[dashed] (0,0) rectangle (1,1);
\draw[dashed] (0,1) rectangle (1,2);
\draw[dashed] (1,0) rectangle (2,1);
\draw[thick]  (2,0) rectangle (3,1);
\draw (2.5,0.5) node {\footnotesize $C$};
\draw (0.5,2.5) node {\footnotesize $A$};
\draw (0.5,3.5) node {\footnotesize $B$};
\draw (3.5,0.5) node {$d$};
\end{scope}
\end{tikzpicture}
& 
\begin{tikzpicture}[scale=0.32]
\begin{scope}
\draw[thick] (0,2) rectangle (1,3);
\draw[thick] (1,1) rectangle (2,2);
\draw[dashed] (0,0) rectangle (1,1);
\draw[dashed] (0,1) rectangle (1,2);
\draw[dashed] (1,0) rectangle (2,1);
\draw[thick]  (2,0) rectangle (3,1);
\draw (2.5,0.5) node {\footnotesize $C$};
\draw (0.5,2.5) node {\footnotesize $A$};
\draw (1.5,1.5) node {\footnotesize $B$};
\draw (3.5,0.5) node {$d$};
\end{scope}
\end{tikzpicture}
& 
\begin{tikzpicture}[scale=0.32]
\begin{scope}
\draw[thick] (3,0) rectangle (4,1);
\draw[thick] (0,2) rectangle (1,3);
\draw[dashed] (0,0) rectangle (1,1);
\draw[dashed] (0,1) rectangle (1,2);
\draw[dashed] (1,0) rectangle (2,1);
\draw[thick] (2,0) rectangle (3,1);
\draw (2.5,0.5) node {\footnotesize $B$};
\draw (0.5,2.5) node {\footnotesize $A$};
\draw (3.5,0.5) node {\footnotesize $C$};
\draw (4.5,0.5) node {\footnotesize $c$};
\end{scope}
\end{tikzpicture}
& 
\begin{tikzpicture}[scale=0.32]
\begin{scope}
\draw[thick] (2,0) rectangle (3,1);
\draw[thick] (3,0) rectangle (4,1);
\draw[dashed] (0,0) rectangle (1,1);
\draw[dashed] (0,1) rectangle (1,2);
\draw[dashed] (1,0) rectangle (2,1);
\draw[thick]  (0,2) rectangle (1,3);
\draw (0.5,2.5) node {\footnotesize $A$};
\draw (0.5,3.5) node {\footnotesize $c$};
\draw (2.5,0.5) node {\footnotesize $B$};
\draw (3.5,0.5) node {\footnotesize $C$};
\end{scope}
\end{tikzpicture} 
\\ 
%%%%%%%%%%%%%%%%%%%%%%%%%%%%%%%%%%%%%
\end{tabular} 
\vspace{3ex}
\caption{The full set of skew shapes and SSYT for Example~\ref{exa:jdt}.  In the first row, the dashed boxes are those in $\delta$, and the solid boxes are those in $\gamma/\delta$.  Lowercase letters correspond to notation in the proof of \eqref{equ:tableauxmanipulation2}, while the uppercase $A$, $B$ and $C$ denote entries of the SSYT that we assume satisfy the necessary inequalities but are otherwise any positive integers.}
\label{tab:jdt}
\end{table}
\end{example}

\section*{Acknowledgments}  We thank Aaron Lauve for helpful comments.  R. Orellana is grateful for the hospitality of the University of Sevilla and IMUS. E. Briand, P. McNamara and M. Rosas are grateful for the hospitality of the University of Rennes 1 and IRMAR in summer 2014.

E.\ Briand and M.\ Rosas have been partially supported by projects MTM2010--19336, MTM2013-40455-P, FQM--333, P12--FQM--2696 and FEDER.  P.\ McNamara was partially supported by a grant from the Simons Foundation (\#245597). R.\ Orellana was partially supported by NSF Grant DMS-130512.

\bibliographystyle{plain}
\bibliography{commutators}

\begin{thebibliography}{10}

\bibitem{AssafMcNamara}
Sami~H. Assaf and Peter R.~W. McNamara.
\newblock A {P}ieri rule for skew shapes.
\newblock {\em J. Combin. Theory Ser. A}, 118(1):277--290, 2011.

\bibitem{Cummins}
C.~J. Cummins.
\newblock On {K}ronecker products of irreducible representations of the
  symmetric group.
\newblock {\em J. Phys. A}, 21(8):1907--1912, 1988.

\bibitem{FoulkesDifferentialOperators}
H.~O. Foulkes.
\newblock Differential operators associated with {$S$}-functions.
\newblock {\em J. London Math. Soc.}, 24:136--143, 1949.

\bibitem{Gessel}
Ira~M. Gessel.
\newblock Counting paths in {Y}oung's lattice.
\newblock {\em J. Statist. Plann. Inference}, 34(1):125--134, 1993.

\bibitem{Hazewinkel}
Michiel Hazewinkel.
\newblock Witt vectors. {I}.
\newblock In {\em Handbook of algebra. {V}ol. 6}, volume~6 of {\em Handb.
  Algebr.}, pages 319--472. Elsevier/North-Holland, Amsterdam, 2009.

\bibitem{JimboMiwa}
Michio Jimbo and Tetsuji Miwa.
\newblock Solitons and infinite-dimensional {L}ie algebras.
\newblock {\em Publ. Res. Inst. Math. Sci.}, 19(3):943--1001, 1983.

\bibitem{LamLauveSottile}
Thomas Lam, Aaron Lauve, and Frank Sottile.
\newblock Skew {L}ittlewood-{R}ichardson rules from {H}opf algebras.
\newblock {\em Int. Math. Res. Not. IMRN}, (6):1205--1219, 2011.

\bibitem{Littlewood1956}
D.~E. Littlewood.
\newblock The {K}ronecker product of symmetric group representations.
\newblock {\em J. London Math. Soc.}, 31:89--93, 1956.

\bibitem{MacdonaldBook}
I.~G. Macdonald.
\newblock {\em Symmetric functions and {H}all polynomials}.
\newblock Oxford Mathematical Monographs. The Clarendon Press, Oxford
  University Press, New York, second edition, 1995.
\newblock With contributions by A. Zelevinsky, Oxford Science Publications.

\bibitem{Pieri}
Mario Pieri.
\newblock Sul problema degli spazi secanti (2).
\newblock {\em Rend. Ist. Lombardo}, 26:534--546, 1893.

\bibitem{RosasGamma}
Mercedes~H. Rosas.
\newblock A comment of the combinatorics of the vertex operator
  $\gamma_{(t|X)}$.
\newblock {\em arXiv https://arxiv.org/abs/1701.02516}, 2017.

\bibitem{ScharfThibonWybourne}
Thomas Scharf, Jean-Yves Thibon, and Brian~G. Wybourne.
\newblock Reduced notation, inner plethysms and the symmetric group.
\newblock {\em J. Phys. A}, 26(24):7461--7478, 1993.

\bibitem{Stanley-diffposet}
Richard~P. Stanley.
\newblock Differential posets.
\newblock {\em J. Amer. Math. Soc.}, 1(4):919--961, 1988.

\bibitem{ThibonCoproductCR}
Jean-Yves Thibon.
\newblock Coproduits de fonctions sym\'etriques.
\newblock {\em C. R. Acad. Sci. Paris S\'er. I Math.}, 312(8):553--556, 1991.

\bibitem{ThibonHopf}
Jean-Yves Thibon.
\newblock Hopf algebras of symmetric functions and tensor products of symmetric
  group representations.
\newblock {\em Internat. J. Algebra Comput.}, 1(2):207--221, 1991.

\bibitem{ThibonSlides}
Jean-Yves Thibon.
\newblock Vertex operators, {K}ronecker products, and {H}ilbert series.
\newblock
  \url{http://congreso.us.es/enredo2009/School_files/Sevilla_Thibon.pdf}, 2009.
\newblock Slides of a course for the school and workshop "Mathematical
  Foundations of Quantum Information", Sevilla. Consulted Aug. 25, 2014.

\end{thebibliography}

%%%%%%%%%%%%%%%%%%%%%%%%%%%%%%%%%%%%%%%%%%%%%%%%%%%%%%%%%%%%%%%%%%%%%%%%%%%%%%%
%\cleardoublepage
%\appendix
%\input{corrections.tex}
%%%%%%%%%%%%%%%%%%%%%%%%%%%%%%%%%%%%%%%%%%%%%%%%%%%%%%%%%%%%%%%%%%%%%%%%%%%%%%%

\end{document}